\newenvironment{@abssec}[1]{%
    \if@twocolumn

      \section*{#1}%
    \else

      \vspace{.05in}\footnotesize
      \parindent .2in
 {\upshape\bfseries #1. }\ignorespaces
    \fi}
\par\vspace{.1in}\fi}
\newenvironment{keywords}{\begin{@abssec}{\keywordsname}}{\end{@abssec}}
\newenvironment{AMS}{\begin{@abssec}{\AMSname}}{\end{@abssec}}
\newcommand\keywordsname{Key words}
\newcommand\AMSname{AMS subject classifications}
\newcommand\AMname{AMS subject classification}
\newcommand\restr[2]{{
\left.\kern-\nulldelimiterspace 
#1 
\vphantom{|} 
\right|_{#2} 
}}
\newtheorem{theorem}{Theorem}[section]
 \newtheorem{lemma}[theorem]{Lemma}
 \newtheorem{proposition}[theorem]{Proposition}
\newtheorem{remark}[theorem]{Remark}
\newcommand{\NN}{\mathbb{N}}
\newcommand{\RR}{\mathbb{R}}
\renewcommand{\SS}{\mathbb{S}}
\newcommand{\id}{{{\rm Id}}}
\def\XXint#1#2#3{{\setbox0=\hbox{$#1{#2#3}{\int}$}
\vcenter{\hbox{$#2#3$}}\kern-.5\wd0}}
\newcommand{\link}{\mathop{\circ\kern-.35em -}}
\newcommand{\ol}{\overline}
\newcommand{\pa}{\partial}
\newcommand{\dv}{{\mathop{\mathrm{div}}}}
\newcommand{\gr}{\nabla}
\newcommand{\ali}{\infty}
\newcommand{\al}{\alpha}
\newcommand{\be}{\beta}
\newcommand{\De}{\Delta}
\newcommand{\ve}{\varepsilon}
\newcommand{\la}{\lambda}
\newcommand{\te}{\theta}
\newcommand{\om}{\omega}
\newcommand{\Om}{\Omega}
\newcommand{\rn}{{\mathbb{R}}^N}
\newcommand{\sg}{\sigma}
\newcommand{\A}{\mathcal{A}}
\newcommand{\cA}{\mathcal{A}}
\newcommand{\C}{\mathcal{C}}
\newcommand{\cC}{\mathcal{C}}
\newcommand{\cdottone}{{\boldsymbol{\cdot}}}
\newcommand{\ri}{{\rm in}}
\newcommand{\ro}{{\rm out}}
\title{Stability analysis of the two-phase torsional rigidity \\near a radial configuration
\thanks{This research was partially supported by the Grants-in-Aid
for Scientific Research (B) ($\sharp$ 26287020)  and Challenging Exploratory Research ($\sharp$ 16K13768) of
Japan Society for the Promotion of Science.}}
\author{Lorenzo Cavallina\thanks{Research Center for Pure and Applied Mathematics,
Graduate School of Information Sciences, Tohoku
University, Sendai, 980-8579, Japan ({\tt cava@ims.is.tohoku.ac.jp}).} }
\date{}
\begin{document}

\maketitle

\vspace*{-0.5cm}
\begin{abstract}
Let $\Om_0$ denote the unit ball of $\RR^N$  ($N\ge 2$) centered at the origin. We suppose that $\Om_0$ contains a core, given by a smaller concentric ball $D_0$, made of a (possibly) different material. We discover that, depending on the relative hardness of the two materials, this radial configuration can either be a local maximizer for the torsional rigidity functional $E$ or a saddle shape. 
In this paper we consider perturbations that simultaneously act on the boundaries $\pa D_0$ and $\pa\Om_0$. This gives rise to resonance effects that are not present when $\pa D_0$ or $\pa\Om_0$ are perturbed in isolation. A detailed analysis of the sign of the second order shape derivative of $E$ is then made possible by employing the use of spherical harmonics.
\end{abstract}

\begin{keywords}
torsion problem, optimization problem, elliptic PDE, shape derivative, spherical harmonics
\end{keywords}

\begin{AMS}
49Q10
\end{AMS}

\pagestyle{plain}
\thispagestyle{plain}

\section{Introduction and main results}
\label{intro}
Let $(D,\Omega)$ be a pair of smooth bounded domains of $\rn$ ($N\ge 2$) such that $\overline{D}\subset \Omega$. The symbol $n$ will denote the outward unit normal to both $D$ and $\Omega$ and $\partial_n:=\frac{\partial}{\partial_n}$ will stand for the usual normal derivative.  
For positive $\sg$, 
define the following piecewise constant function:
$
\sg_{D,\Om}:= \sg \chi_{{D}}+ \chi_{\Om\setminus D},
$
(where $\chi_\cdottone$ denotes the characteristic function). 
We consider the following functional
\begin{equation}\label{E}
E(D,\Omega):=\int_{\Om} \sg_{D,\Omega} \abs{\gr u}^2,
\end{equation}
where the function $u$ is the solution (in the distributional sense) of the following two-phase bondary value problem. 
\begin{equation}\label{pb}
\begin{cases}
-\dv(\sg_{D,\Omega} \gr u)=1 \quad &\text{ in }\Omega, \\
u=0\quad &\text{ on } \partial \Omega.
\end{cases}
\end{equation}

Physically speaking, the value $E(D,\Omega)$ represents the torsional rigidity of an infinitely long beam $\Omega\times\RR$ made of two different materials (whose hardness is represented by the values $\sg$ and 1) such that their distribution in each cross sections $\Om\times\{x_{N+1}\}$ is given by the function $\sg_{D,\Om}$ for all $x_{N+1}\in\RR$. 

The case $D=\emptyset$ was first studied by P\'olya. In \cite{polya}, it was proved that the ball maximizes the functional $E(\emptyset,\boldsymbol{\cdot})$ among all Lipschitz domains of a given volume. We are going to provide a generalization of P\'olya's result for a two-phase setting. 
Fix $R\in(0,1)$ and let $\Omega_0$ and $D_0$ denote the open balls centered at the origin of radius $1$ and $R$ respectively. Every other pair of concentric balls can be obtained by translating, rescaling and properly choosing $R\in(0,1)$.
The aim of this paper is to study the local optimality of the symmetric configuration $(D_0,\Om_0)$. We will study how the torsional rigidity $E$ is affected by (possibly simultaneous) small perturbations of $D_0$ and $\Om_0$. 
To this end we will employ the use of shape derivatives up to the second order and spherical harmonic expansions. The results of this paper might find an application in the shape optimization of non-evenly coated compound materials or the study of the heat distribution for two-phase heat conductors in a stationary regime. Last, we remark that the methods presented here are suited for a various range of shape functionals. Among other works, we would like to cite \cite{conca} and \cite{sensitivity} for an in-depth analysis of the ground state energy of a two-phase conductor, modeled by the first Dirichlet eigenvalue for the operator $-\dv(\sg_{D,\Om}\gr\cdottone)$. 

Now we will introduce some notation that will be used throughout the paper. The function $u$ will denote the solution to \eqref{pb} corresponding to the pair $(D_0,\Om_0)$. The following explicit expression for $u$ is well known:
\begin{equation}\label{u}
u(x)=\begin{cases}
\frac{1-R^2}{2N}+\frac{R^2-\abs{x}^2}{2N\sg}\quad &\text{ for } \abs{x}\in [0,R],\\
\frac{1-\abs{x}^2}{2N}\quad &\text{ for }\abs{x}\in (R,1].\\
\end{cases}
\end{equation}
We will also employ the following notation for Jacobian and Hessian matrix respectively:
$$
(D{v})_{ij}:=\frac{\partial v_i}{\partial x_j}, \quad (D^2 f)_{ij}= \frac{\partial^2 f}{\partial x_i\partial x_j},
$$
for all smooth vector field ${v}=(v_1,\dots, v_N)$ and real valued function $f$ defined on some open subset of $\rn$.
We will introduce some differential operators from tangential calculus that will be used in the sequel. For smooth $f$ and $v$ defined on $\partial D\cup \partial \Omega$  we set
\begin{equation}\label{diffop}
\begin{aligned}
\gr_\tau f &:= \gr \widetilde{f}-(\gr \widetilde{f}\cdot n)n \quad &\text{( tangential gradient)},   \\
{\dv}_\tau { v}&:= \dv \widetilde{{ v}}-n\cdot \left( D \widetilde{{ v}}\,n\right) &\text{ (tangential divergence)},
\end{aligned}
\end{equation}
where $\widetilde{f}$ and $\widetilde{v}$ are some smooth extensions on a neighborhood of $\pa D\cup \partial \Omega$ of $f$ and $v$ respectively. It is known that the differential operators defined in \eqref{diffop} do not depend on the choice of the extensions.   
Moreover we denote by $D_\tau {v}$ the matrix whose $i$-th row is given by $\gr_\tau v_i$.
We define the (additive) mean curvature as $H:=\dv_\tau n$ (cf. \cite{henrot, SG}); according to this definition, the mean curvature $H$ of $\pa D_0$ is given by  $(N-1)/R$.   
Finally, for any sufficiently smooth function $f$ defined in a neighborhood of $\pa D$, its {\em jump through the interface} $\pa D$ will be denoted by $[f]:= f_+-f_-$, where $f_+$ and $f_-$ are the traces of $f$ on $\pa D$ taken from the outside and inside respectively.  
The following theorem involving the first order shape derivative of $E$ will be proved in Subection \ref{ssec 3.2}.
\newtheorem{test}{Theorem}
\renewcommand\thetest{\Roman{test}}
\begin{test}\label{thm I}
For smooth perturbations that fix the volume (at least at first-order), the first shape derivative of $E$ at $(D_0,\Om_0)$ vanishes.
\end{test}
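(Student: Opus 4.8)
The plan is to first exploit the variational structure of the torsion problem, so as to avoid differentiating the gradient term directly. Testing the weak formulation of \eqref{pb} against $u$ itself gives $\int_{\Om_0}\sg_{D_0,\Om_0}\abs{\gr u}^2=\int_{\Om_0}u$, so that $E(D_0,\Om_0)=\int_{\Om_0}u$, and the same identity holds along any perturbed pair $(D_t,\Om_t)$. I would parametrise the perturbation by a smooth vector field $V$ generating a flow $\Phi_t=\id+tV+o(t)$, and write $\phi:=V\cdot n$ on $\pa D_0$ and $\psi:=V\cdot n$ on $\pa\Om_0$ for its normal traces. Differentiating $E=\int_{\Om}u$ and using that the transported domain integral produces a boundary term $\int_{\pa\Om_0}u\,\psi$ which vanishes because $u=0$ on $\pa\Om_0$, the first shape derivative reduces to
\begin{equation*}
\frac{d}{dt}E(D_t,\Om_t)\Big|_{t=0}=\int_{\Om_0}u',
\end{equation*}
where $u'$ denotes the shape derivative of the state function.

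Next I would identify the transmission problem solved by $u'$. Since $\sg_{D_0,\Om_0}$ is constant in each phase, $u'$ is harmonic in $D_0$ and in $\Om_0\setminus\ol{D_0}$; differentiating the Dirichlet condition gives $u'=-\psi\,\pa_n u$ on $\pa\Om_0$, while differentiating the two transmission conditions $[u]=0$ and $[\sg_{D_0,\Om_0}\pa_n u]=0$ along the moving interface yields $[u']=-\phi\,[\pa_n u]$ together with a jump condition for the flux $[\sg_{D_0,\Om_0}\pa_n u']$. I would then feed $\int_{\Om_0}u'$ into a Green identity against $u$, integrating by parts separately in $D_0$ and in $\Om_0\setminus\ol{D_0}$ and collecting the interface contributions through the jumps just recorded. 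The decisive simplification is that, on the radial configuration, the explicit formula \eqref{u} makes $u$ constant on the sphere $\pa D_0$ (so $\gr_\tau u=0$ there) and makes $\pa_n u$ constant on each of $\pa D_0$ and $\pa\Om_0$; consequently every boundary density produced by the integration by parts is constant on the corresponding sphere, the flux-jump term integrates to zero over the closed surface $\pa D_0$, and what survives is
\begin{equation*}
\frac{d}{dt}E(D_t,\Om_t)\Big|_{t=0}=a\int_{\pa\Om_0}\psi\,dS+b\int_{\pa D_0}\phi\,dS
\end{equation*}
for two explicit constants $a,b$ depending only on $N$, $R$ and $\sg$. In particular the effects of the inner and outer perturbations decouple at first order.

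Finally, the first-order volume constraints translate precisely into $\int_{\pa\Om_0}\psi\,dS=0$ and $\int_{\pa D_0}\phi\,dS=0$ (these are the derivatives of $\vol(\Om_t)$ and of $\vol(D_t)$), which kills both terms and proves the theorem. An alternative, essentially coordinate-free route is to invoke the Hadamard--Zol\'esio structure theorem to write $E'$ as a boundary functional of $\psi$ and $\phi$, and then note that rotational invariance of $(D_0,\Om_0)$ and of $u$ forces its densities to be constant on each sphere, again reducing $E'$ to a combination of the two averages. I expect the main obstacle to be the rigorous derivation of the jump conditions for $u'$ under a \emph{simultaneous} perturbation of $\pa D_0$ and $\pa\Om_0$ --- in particular the tangential-divergence form of $[\sg_{D_0,\Om_0}\pa_n u']$ --- since this is where the coupling between the two interfaces first enters; the saving grace at first order is that the radial symmetry forces $\gr_\tau u$ to vanish on $\pa D_0$, so these delicate terms either collapse or integrate away.
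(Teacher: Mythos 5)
Your proposal is correct, but it follows a genuinely different route from the paper. The paper exploits the structure theorem to split $E'(\Phi)=l_1^E(h_{\rm in}\cdot n)+l_1^E(h_{\rm out}\cdot n)$, quotes \cite[Theorem 2.4]{cava} for the inner term, and computes the outer term by applying Hadamard's formula directly to the Dirichlet energy $\int_{\Om_t}\sg_t|\gr u_t|^2$, where the $u'$-contribution dies by the orthogonality $\int_{\Om_0}\sg_0\gr u'_{\rm out}\cdot\gr u=0$; this yields the explicit linear form \eqref{E'}, whose two densities $-[\sg_0|\gr u|^2]$ and $|\gr u|^2$ are constant on the respective spheres, so \eqref{1st} kills both integrals. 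You instead pass to the compliance form $E(D_t,\Om_t)=\int_{\Om_t}u_t$, which makes the Hadamard transport term vanish (since $u=0$ on $\pa\Om_0$) and reduces $E'$ to $\int_{\Om_0}u'$, and then you convert this volume integral into boundary integrals via Green's identity and the transmission problem for $u'$. Your interface algebra checks out: using $\pa_n u_+=\sg\,\pa_n u_-$ and $[u']=-[\pa_n u]\,\phi$ one gets the interface density $-\sg\,\pa_n u_-[\pa_n u]\,\phi$, and with the explicit values from \eqref{u} your constants $a=1/N^2$, $b=R^2(1-\sg)/(N^2\sg)$ agree exactly with the densities in \eqref{E'}. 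What the paper's route buys is modularity (it recycles \cite{cava}, never needs the transmission problem for a \emph{simultaneous} perturbation, and formula \eqref{E'} is reused later as the starting point of the second-order computation in Proposition \ref{prop l2e}); what your route buys is a self-contained, more elementary argument in which nothing of $|\gr u_t|^2$ is ever differentiated. The price you pay is exactly where you locate it: one must justify the flux-jump condition $[\sg_0\pa_n u']$ for the combined perturbation, and your observation that radial symmetry ($\gr_\tau u=0$ on $\pa D_0$) makes this jump either vanish identically or appear as a tangential divergence multiplying the constant $u|_{\pa D_0}$ --- hence integrate to zero over the closed sphere --- is precisely what closes that gap; alternatively you could simply invoke linearity $u'=u'_{\rm in}+u'_{\rm out}$ and problems \eqref{u'in}--\eqref{u'out} as the paper does.
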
 
The following result is an improvement of Theorem \ref{thm I} and will be proved in the end of Subsection \ref{ssec 4.3}. 
\begin{test}\label{thm II}
If $\sg\ge1$, $(D_0,\Om_0)$ is a local maximum for the functional $E$ under the volume and barycenter-preserving constraint. Moreover, if $\sg<1$, $(D_0,\Om_0)$ is a saddle shape for $E$ under the above-mentioned constraint. 
\end{test}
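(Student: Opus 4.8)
The plan is to use Theorem~\ref{thm I} to reduce the statement to the sign of the second shape derivative, and then to diagonalize that quadratic form over spherical harmonics. Since Theorem~\ref{thm I} guarantees that the first shape derivative of $E$ vanishes at $(D_0,\Om_0)$ along every admissible (volume- and barycenter-preserving, at first order) deformation, the local nature of $(D_0,\Om_0)$ is governed entirely by the second shape derivative $E''$ restricted to the tangent space of the constraint. First I would describe an admissible deformation by the pair of scalar normal velocities $\varphi:=V\cdot n$ on $\pa D_0$ and $\psi:=V\cdot n$ on $\pa\Om_0$, invoking the Hadamard structure theorem to discard tangential components. The goal is then to show that the quadratic form $(\varphi,\psi)\mapsto E''[\varphi,\psi]$ is negative definite when $\sg\ge 1$ and indefinite when $\sg<1$.

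Next I would derive a workable expression for $E''[\varphi,\psi]$. Writing $E=\int_{\Om_0}u$ and applying the standard shape-calculus machinery, I would reduce $E''$ to an expression involving the (Eulerian) shape derivative $u'$ of the state. Since the source in \eqref{pb} is shape-independent, $u'$ is harmonic in each phase ($\De u'=0$ in $D_0$ and in $\Om_0\setminus\overline{D_0}$); the shape dependence enters only through transmission (jump) conditions across $\pa D_0$ forced by $\varphi$ together with the known profile $u$ in \eqref{u}, and through the Dirichlet relation $u'=-\psi\,\pa_n u$ on $\pa\Om_0$. These three sets of data determine $u'$ completely.

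The decisive step is the spherical harmonic expansion $\varphi=\sum_{k,m}a_{k,m}Y_{k,m}$ on $\pa D_0$ and $\psi=\sum_{k,m}b_{k,m}Y_{k,m}$ on $\pa\Om_0$. Because $(D_0,\Om_0)$ is rotationally symmetric, $u'$ separates variables: in each degree $k$ it is a combination of the harmonic profiles $r^{k}Y_{k,m}$ and $r^{2-N-k}Y_{k,m}$, with only the regular profile surviving inside $D_0$. Matching the transmission conditions across $\pa D_0$ and the Dirichlet data on $\pa\Om_0$ yields an explicit linear system for the radial coefficients in terms of $(a_{k,m},b_{k,m})$. Substituting back, the second derivative decouples into an orthogonal sum
\[
E''[\varphi,\psi]=\sum_{k\ge 0}\sum_{m}Q_k(a_{k,m},b_{k,m}),
\]
where each $Q_k$ is a $2\times 2$ quadratic form depending only on $k$ (not on $m$), whose off-diagonal entry encodes the resonance coupling between the two perturbed boundaries within the same mode. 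The constraints annihilate the modes of degree $0$ (volume) and degree $1$ (barycenter), so only $k\ge 2$ remains to be analyzed.

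Finally I would study the sign of the symmetric matrix of $Q_k=\begin{pmatrix}\alpha_k&\beta_k\\\beta_k&\gamma_k\end{pmatrix}$ as a function of $\sg$ and $k$. Negative definiteness (hence a local maximum) is equivalent to $\alpha_k<0$, $\gamma_k<0$ and $\det Q_k>0$ for every $k\ge 2$, whereas a single mode with $\det Q_k<0$ (or a positive diagonal entry) produces a saddle direction. I expect $\gamma_k<0$ for all $\sg$, this being the pure outer-boundary variation consistent with P\'olya's theorem, and the crux to lie in the interplay of the interface term $\alpha_k$ with the coupling term $\beta_k$. The main obstacle is precisely this sign analysis, carried out uniformly in $k$: one must show that the factor $\sg-1$ controls the definiteness of $Q_k$, so that $\det Q_k>0$ throughout when $\sg\ge1$, while for $\sg<1$ the resonance term overwhelms the diagonal and forces $\det Q_k<0$ for at least one admissible degree. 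Tracking the leading-order behavior of the entries of $Q_k$ as $k\to\infty$ should both secure uniform definiteness in the stable regime and pinpoint the destabilizing mode in the saddle regime.
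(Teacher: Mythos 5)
Your overall strategy --- reduce to the sign of $E''$ via Theorem~I, expand the normal velocities in spherical harmonics, split $E''$ into $2\times 2$ quadratic forms $Q_k$ mode by mode, and decide definiteness from the diagonal entries and $\det Q_k$ --- is exactly the paper's route (the paper's lemma on the resonant term is precisely your determinant test, written as a discriminant of a quadratic in $t=\alpha^{\rm in}_{k,i}/\alpha^{\rm out}_{k,i}$). However, there is a genuine gap in how you implement the constraint. The admissible class $\mathcal{A}^*_{\rm bar}$ fixes the barycenter of $\Omega_t$ \emph{only}: the barycenter condition exists to kill the translation invariance of $E$, and it translates into $\int_{\partial\Omega_0}x_i\,h_{\rm out}\cdot n=0$, i.e.\ it annihilates only the \emph{outer} degree-one coefficients. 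The degree-one modes of $h_{\rm in}\cdot n$ (rigid displacements of the core relative to the shell) remain admissible, so your claim that ``only $k\ge2$ remains to be analyzed'' is false. This is not cosmetic: for $\sigma<1$ the paper's destabilizing direction is exactly this inner $k=1$ mode, which gives $E''_{\rm in}(1)=2(1-\sigma)/F(1)>0$ uniformly in $R$ and $N$; once you discard it, the existence of a positive mode among $k\ge2$ becomes a parameter-dependent question (it holds, e.g., for $\sigma$ near $1$, but requires extra analysis you have not supplied), so your saddle conclusion does not follow. Symmetrically, for $\sigma\ge1$ your negative-definiteness check is incomplete, since it never verifies the sign along the surviving inner translations (it does hold there, as $E''_{\rm in}(1)=2(1-\sigma)/F(1)\le 0$ in that regime, but it must be checked).

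Two smaller inaccuracies. First, your expectation that the pure outer entry $\gamma_k$ is negative for all $\sigma$, ``consistent with P\'olya,'' fails for $\sigma<1$: there $E''_{\rm out}(1)=2(1-\sigma)/F(1)>0$, and even some modes $k\ge 2$ can be positive depending on $(\sigma,R,N)$; P\'olya's theorem covers only the single-phase case $\sigma=1$. Second, you cannot simply ``invoke the structure theorem to discard tangential components'': the structure formula leaves an acceleration term $l_1^E(Z)$ in $E''$ which does not vanish for general volume-preserving deformations. The paper eliminates it by pairing the first-order form (which on each sphere is integration against a constant, since $|\nabla u|$ is radially constant) with the second-order volume-preserving identity $\int_{\partial\omega}H(h\cdot n)^2+\int_{\partial\omega}Z=0$; alternatively one computes $l_2^E$ with Hadamard-type normal perturbations and then reinstates the constraint. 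Your plan needs one of these steps to produce the curvature terms correctly.
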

 
In Section 2, known facts about shape derivatives will be presented. Moreover, perturbations subject to various constraints will be introduced. Section 3 will be devoted to the computation of the first order shape derivatives of both the state function $u$ and the shape functional $E$. We will also give a proof of Theorem \ref{thm I} here. In Section 4, we will deal with the computation of the second order shape derivative of $E$. Here, the spherical harmonic expansion of $u'$, performed in Subsection \ref{ssec 3.1}, will play a crucial role in determining the sign of $E''$.  
\section{Preliminaries on shape derivatives}\label{sec 2}
\setcounter{equation}{0}
\subsection{Basic definitions and structure formula}\label{ssec 2.1}
We are interested in the following class of smooth perturbations:
$$
\mathcal{A}:=\Big\{ \Phi \in \C^\infty\big([0,1)\times \rn,\rn\big)  \;\Big|\;  \Phi(0,\boldsymbol{\cdot})\equiv 0     \Big\}.
$$

For $\Phi \in \mathcal{A}$, $t\in[0,1)$ and an arbitrary domain $\omega\subset\rn$ we set 
$\Phi(t):=\Phi(t,\boldsymbol{\cdot})$ and $\left({\rm Id}+\Phi(t)\right)\omega:=\{x+\Phi(t,x)\;\vert\; x\in \omega\}$.
When no confusion arises, we will also write $\omega_t$ for $\left({\rm Id}+\Phi(t)\right)\omega$.   
By assumption, for a given $\Phi\in\cA$ there exists some smooth vector field $h$ such that the following expansion holds:
\begin{equation}\label{whosh}
\Phi(t)= t h +o(t) \quad \text{as }t\to0.
\end{equation}

For any shape functional $J$, domain $\omega$ and deformation field $\Phi\in\cA$, we define the \emph{shape derivative of $J$ {with respect to} $\Phi$ at $\omega$} as the following quantity
\begin{equation}\label{def shape der}
J'(\Phi):=\restr{\frac{d}{dt} J\Big(\left({\rm Id}+\Phi(t)\right)\omega\Big)}{t=0} = \lim_{t\to 0}\, \frac{J\Big(\left({\rm Id}+\Phi(t)\right)\omega\Big)-J(\omega)}{t}.
\end{equation}
Second order shape derivatives are defined analogously (we refer to \cite{simon} for one of the first general works on the topic and \cite{new} for a more recent developement of the theory in the framework of differential forms). We note that, when dealing with a functional that takes several domains as input (like the functional $E$, defined in \eqref{E}, does), definition \eqref{def shape der} has to be modified accordingly, applying ${\rm Id}+\Phi(t)$ to each domain. 

Usually, shape functionals depend on the input domain by means of a real valued function, called {\em state function} in the literature.  
Here we give the definition of both {\em shape derivative} and {\em material derivative} of a state function. 
Fix an admissible perturbation $\Phi\in\A$ and let $u=u(t,x)\in \cC^1\big( [0,1), \cC^1(\Om_t,\RR)\big)$.
The {\em shape derivative} of the state function $u$ is defined as the following partial derivative with respect to $t$ at a fixed point $x\in\Om$:
$$
u'(t_0,x):= \frac{\partial u}{\partial t} (t_0,x), \;\text{ for }x\in\Omega, t_0\in [0,1). 
$$ 
On the other hand, differentiating along the trajectories gives rise to the {\em material derivative}:
$$
\dot{u}(t_0,x):= \frac{\partial v}{\partial t}(t_0,x), \;x\in\Omega, t_0\in [0,1),
$$ 
where $v(t,x):=u(t, x+\Phi(t,x))$.
From now on for the sake of brevity we will omit the dependency on the ``time" variable unless strictly necessary and write $u(x)$, $u'(x)$ and $\dot{u}(x)$ for $u(0,x)$, $u'(0,x)$ and $\dot{u}(0,x)$. 
The following relationship between shape and material derivatives hold true:
\begin{equation}
\label{u'a} u'=\dot{u}-\gr u \cdot h.
\end{equation}

We are interested in the case where $u(t,\boldsymbol{\cdot}):=u_{t}$ is the solution to problem \eqref{pb} corresponding to the distribution $\sg_t=\sg_{D_t,\Om_t}$: we will agree on the fact that the function $u$ defined in \eqref{u} corresponds to $u_0$ and that $\sg_0=\sg_{D_0,\Om_0}$.

We will now state a very important result, namely the following {\em structure theorem}  for shape derivatives (cf. \cite[Theorem 5.9.2, p. 220]{henrot} and the subsequent corollaries). 
For every shape functional $J$, domain $\om$ and pertubation field $\Phi$ in $\mathcal{A}$, under suitable smoothness assumptions the following holds:
\begin{equation}\label{expansion}
J(\omega_t)=J(\omega)+t \, l_1^J (h\cdot n)+\frac{t^2}{2}\left(  l_2^J(h\cdot n,h\cdot n)+l_1^J(Z)  	\right) + o(t^2) \; \text{ as }t\to 0,
\end{equation}
for some linear $l_1^J: \C^\infty (\partial \omega)\to \RR$ and bilinear form $l_2^J: \C^\infty (\partial \omega)\times \C^\infty (\partial \omega)\to\RR$ to be determined eventually. The term $l_1^J(Z)$ in \eqref{expansion} corresponds to an ``acceleration'' term due to the tangential component of the perturbation field $\Phi$. Namely, by \cite[Corollary 5.9.3, p. 221]{henrot}, we have
\begin{equation}\label{Z}
Z:=\left( V'+(D h) h   \right)\cdot n+ ((D_\tau n) h_\tau)\cdot h_\tau-2  h_\tau \cdot \gr_\tau (h\cdot n),
\end{equation}
where $V(t,\Phi(t)):=\partial_t \Phi(t)$ and $V':=\partial_t V(t,\boldsymbol{\cdot})$ and $h_\tau := h-(h\cdot n)n$ is the tangential component of the vector field $h$.

We are going to apply the expansion \eqref{expansion} to the functional $E$ at the configuration given by $(D_0,\Om_0)$. The linear form $l_1^E$ will be computed in Subsection \ref{ssec 3.2}, while the computation of the bilinear form $l_2^E$ will be the topic of Subsection \ref{ssec 4.1}. There will be no need to compute the function $Z$ (defined by \eqref{Z}) directly. Its computation will be avoided by employing the $2^{\rm nd}$ order volume-preserving condition \eqref{2nd} that will be derived in the next subsection.

\subsection{Perturbations verifying some geometrical constraints}\label{ssec 2.2}

In this paper we are dealing with a constrained optimization problem (we refer to \cite{bandle} for a slightly different, though equivalent way of analyzing the second order shape derivative of functionals subject to geometrical constraints). 
Let $|\cdottone|$ denote the volume (Lebesgue measure) of a set and define the class of perturbations in $\A$ that perturb  $D_0$ and $\Omega_0$ while keeping their volume fixed as:
$$
\cA^*:= \Big\{  \Phi\in \mathcal{A}  \;\Big |\;  \abs{(\id+\Phi(t))D_0}=\abs{D_0} \text{ and }  \abs{(\id+\Phi(t))\Omega_0}=\abs{\Omega_0} \text{ for all } t\in [0,1)  \Big\}.
$$
The following expansion of the volume functional $\abs{\boldsymbol{\cdot}}$ is well knwon (it is a consequence of the structure formula \eqref{expansion}, the Hadamard formula \cite[Corollary 5.2.8, p. 176]{henrot} and \cite[Example 1, p. 225]{henrot}):
for all bounded domains $\om\subset\rn$ and $\Phi\in\cA$ we have
\begin{equation}\label{volex}
\abs{\omega_t}=\abs{\omega}+ t \int_{\partial\omega} h\cdot n + \frac{t^2}{2} \left( \int_{\partial \omega} H (h\cdot n)^2 + \int_{\partial\omega} Z     \right) + o(t^2) \text{ as }t\to0.
\end{equation}
In particular, for all $\Phi\in\cA^*$, this yields the following two conditions for $\omega=D_0$, $\Omega_0$: 
\begin{align}
&\int_{\partial \omega} h\cdot n =0,  \quad\quad\quad&\text{($1^{\rm st}$ order volume preserving)} \label{1st}\\
&\int_{\partial \omega} H (h\cdot n)^2+ \int_{\partial \omega}Z=0. &\text{($2^{\rm nd}$ order volume preserving)}\label{2nd}
\end{align} 

Moreover, for every perturbation $\Phi\in\cA$, it will be useful for our purposes to separate its contributions on $\pa D_0$ and $\pa\Om_0$.
Take some positive constants $R_1$ and $R_2$ such that $R<R_1<R_2<1$ and define
\begin{equation}\nonumber\label{Ainout}
\cA_{\rm in}:= \big\{\Phi\in\cA \; \big|\; \Phi(t,x)=0 \quad \text{if }\abs{x}\ge R_2 \big\}, \quad 
\cA_{\rm out}:= \big\{\Phi\in\cA \; \big|\; \Phi(t,x)=0 \quad \text{if }\abs{x}\le R_1 \big\}. 
\end{equation}
Notice that for every $\Phi\in\cA$ there exist some $\Phi_{\rm in}\in\cA_{\rm in}$ and $\Phi_{\rm out}\in\cA_{\rm out}$ such that 
$
\Phi=\Phi_{\rm in}+\Phi_{\rm out},
$
 moreover the values of $\Phi_{\rm in}$ and $\Phi_{\rm out}$ are uniquely determined (and actually equal to $\Phi$) on $\ol{B_{R_1}}$ and $\rn\setminus B_{R_2}$ respectively. We will set $\Phi_\ri= t h_\ri + o(t)$ and $\Phi_\ro = t h_\ro + o(t)$ for $t\to 0$.  
In a similar manner we put:
$$
\cA_{\rm in}^*:=\cA^*\cap \cA_{\rm in}, \quad \quad \cA_{\rm out}^*:= \cA^*\cap\cA_{\rm out}.
$$
Finally, we recall that the functional $E$ is translation invariant, namely $E(D+x_0,\Om+x_0)=E(D,\Om)$ for any $x_0\in\RR^N$. Therefore, the following class of perturbation fields that fix both the {\bf volume} of the domains $D_t$ and $\Om_t$, and the {\bf barycenter} of $\Om_t$ is the most natural for our purposes:  
$$
\A_{\rm bar}^* :=\left\{\Phi\in\A^* \;\middle|\;  \int_{\Om_t} x dx=0\right \}
$$
By an application of the Hadamard formula (see \cite[Corollary 5.2.8, p. 176]{henrot}) we see that, for all $\A_{\rm bar}^*\ni \Phi=\Phi_\ri+\Phi_\ro$ we must have 
\begin{equation}\label{barycenter preserving}
\int_{\pa\Om_0}x_i h_\ro\cdot n=0 \quad \mbox{ for all } i=1,\dots,N.
\end{equation}
We conclude this section by citing an extension result for volume-preserving perturbations (see \cite[Remark 2.2]{cava} for an explicit construction)
\begin{remark}\label{rmk extension}
For any smooth $h_{\rm in}:\pa D_0\to \rn$ that satisfies the first order volume-preserving condition \eqref{1st}, there exists a perturbation field $\Phi_{\rm in}\in\cA_{\rm in}^*$ such that $\Phi_{\rm in}(t)=t h_{\rm in} + o (t) $ as $t\to 0$. Analogous results hold for any smooth function $h_{\rm out}:\pa\Om_0\to\rn$ that satisfies \eqref{1st} on $\pa \Om_0$ or \eqref{barycenter preserving}.
\end{remark}

\section{$1^{\rm st}$ order shape derivatives} \label{sec 3}
\setcounter{equation}{0}
In this section we will compute the first order derivative of the state function $u$ and that of the functional $E$. As predicted by the general theory, $u'$ will not appear in the final expression of $E'$. By combining these results with the first order volume-preserving condition \eqref{1st} we will provide an immediate proof of Theorem \ref{thm I}.
\subsection{Shape derivative of the state function}\label{ssec 3.1}
\begin{proposition}\label{prop u'}
Take an arbitrary perturbation field $\Phi\in\A$ and set: $\Phi=\Phi_{\rm in}+\Phi_{\rm out}$ for some $\Phi_{\rm in}\in\A_{\rm in}$ and $\Phi_{\rm out}\in\A_{\rm out}$. The the first order shape derivative $u'$ of the state function $u$, computed with respect to the pertubation field $\Phi\in\A$, can be decomposed as $u'=u'_{\rm in}+u'_{\rm out}$, where $u'_{\rm in}$ and $u'_{\rm out}$ are the solutions to the following.
\noindent\begin{minipage}{.5\linewidth}
\begin{equation}\label{u'in}
\begin{cases}\De u'_{\rm in}=0  \quad\mbox{ in } D_0\cup (\Om_0\setminus\ol{D_0}),\\
[\sg_0\, \pa_n u'_{\rm in}]=0 \quad\mbox{ on }\pa D_0,\\
[u'_{\rm in}]=-[\pa_n u] h_{\rm in}\cdot n\quad\mbox{ on }\pa D_0,\\
u'_{\rm in}=0 \quad \mbox{ on }\pa\Om_0.
\end{cases}
\end{equation}
\end{minipage}%
\begin{minipage}{.5\linewidth}
\begin{equation}\label{u'out}
\begin{cases}
\De u'_{\rm out}=0  \quad \mbox{ in } D_0\cup (\Om_0\setminus\ol{D_0}),\\
[\sg_0\, \pa_n u'_{\rm out}]=0 \quad\mbox{ on }\pa D_0,\\
[u'_{\rm out}]=0\quad \mbox{ on }\pa D_0,\\
u'_{\rm out}= -\pa_n u\, h_{\rm out}\cdot n \quad \mbox{ on }\pa\Om_0.
\end{cases}
\end{equation}
\end{minipage}
\end{proposition}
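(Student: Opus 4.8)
The plan is to characterize $u'$ by differentiating, with respect to the flow parameter $t$, each of the relations that define $u_t$ as the solution of the two-phase problem \eqref{pb} on the moving configuration $(D_t,\Om_t)$, and then to recover the stated decomposition from the linearity of the resulting problem. Throughout I will use that the diffeomorphism $\Phi_t:=\id+\Phi(t)$ maps $D_0$ onto $D_t$ and $\Om_0\setminus\ol{D_0}$ onto $\Om_t\setminus\ol{D_t}$, so that the coefficient is frozen in the sense $\sg_t\circ\Phi_t=\sg_0$; this is the property that makes the whole computation tractable, and, as a byproduct, transporting the weak form of \eqref{pb} to $\Om_0$ lets one establish differentiability of $t\mapsto v(t)$ (hence the very existence of $u'$) by the implicit function theorem. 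I will also freely use the relation \eqref{u'a} between the shape derivative $u'$ and the material derivative $\dot u$, the latter being the $t$-derivative of the pulled-back function $v(t,\cdot)=u_t\circ\Phi_t\in\cC^1(\Om_0)$.

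First I would treat the interior equation and the two Dirichlet-type conditions. For a point $x$ lying strictly inside one of the two phases, $x$ remains in the same phase of $(D_t,\Om_t)$ for small $t$, so $-\sg_0\De u_t=1$ holds near $x$ with $\sg_0$ constant; differentiating in $t$ at fixed $x$ gives $\De u'=0$ in $D_0\cup(\Om_0\setminus\ol{D_0})$. For the Dirichlet-type conditions I would exploit that $\dot u$ is continuous across $\pa D_0$ and vanishes on $\pa\Om_0$ (both inherited from $v=u_t\circ\Phi_t$, since $u_t$ is continuous across $\pa D_t$ and vanishes on $\pa\Om_t$). Using \eqref{u'a}, on $\pa D_0$ one gets $[u']=[\dot u]-[\gr u\cdot h]=-[\gr u]\cdot h$, and since $[u]=0$ forces the tangential gradient to be continuous, $[\gr u]=[\pa_n u]\,n$, whence $[u']=-[\pa_n u]\,(h\cdot n)$; similarly on $\pa\Om_0$, where $\gr u=\pa_n u\,n$, one finds $u'=-\pa_n u\,(h\cdot n)$.

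The delicate point, and the one I expect to be the main obstacle, is the flux transmission condition $[\sg_0\pa_n u']=0$ on $\pa D_0$: differentiating the identity $[\sg_t\pa_{n_t}u_t]=0$ directly along the flow is awkward because both the interface $\pa D_t$ and its unit normal $n_t$ move with $t$, producing curvature and tangential-derivative corrections. To avoid differentiating the normal I would instead transport the weak formulation of \eqref{pb} back to the fixed domain $\Om_0$: writing $F_t:=\id+D\Phi(t)$, $J_t:=\det F_t$ and $A_t:=J_t F_t^{-1}F_t^{-T}$, and using $\sg_t\circ\Phi_t=\sg_0$, the weak form becomes $\int_{\Om_0}\sg_0 A_t\gr v\cdot\gr\psi=\int_{\Om_0}J_t\psi$ for every $\psi\in H^1_0(\Om_0)$. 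Differentiating this identity at $t=0$ yields a weak equation for $\dot u$ whose natural transmission condition across $\pa D_0$ is an interface identity for $[\sg_0\pa_n\dot u]$; substituting $\dot u=u'+\gr u\cdot h$ and checking that the extra interface terms cancel against $[\sg_0\pa_n(\gr u\cdot h)]$ then leaves exactly $[\sg_0\pa_n u']=0$. The main work lies in verifying this cancellation.

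Finally I would assemble the decomposition. Writing $\Phi=\Phi_{\rm in}+\Phi_{\rm out}$ with $h_{\rm out}\equiv0$ near $\pa D_0$ and $h_{\rm in}\equiv0$ near $\pa\Om_0$, the data split as $h\cdot n=h_{\rm in}\cdot n$ on $\pa D_0$ and $h\cdot n=h_{\rm out}\cdot n$ on $\pa\Om_0$. Thus $u'$ solves the transmission problem with interior equation $\De u'=0$, flux condition $[\sg_0\pa_n u']=0$, jump $[u']=-[\pa_n u]\,h_{\rm in}\cdot n$ on $\pa D_0$ and boundary value $u'=-\pa_n u\,h_{\rm out}\cdot n$ on $\pa\Om_0$. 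Since this problem is linear and uniquely solvable, its solution is the sum of the solutions $u'_{\rm in}$ and $u'_{\rm out}$ of \eqref{u'in} and \eqref{u'out}, obtained by keeping only the $h_{\rm in}$ data or only the $h_{\rm out}$ data respectively. This yields $u'=u'_{\rm in}+u'_{\rm out}$ and completes the argument.
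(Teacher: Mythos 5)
Your proposal is correct, and it is organized in a genuinely different way from the paper's proof. The paper decomposes \emph{first}: by linearity of $\Phi\mapsto u'$ it writes $u'=u'_{\rm in}+u'_{\rm out}$ as the shape derivatives with respect to $\Phi_{\rm in}$ and $\Phi_{\rm out}$ separately; it then cites \cite[Proposition 2.3]{cava} for the inner problem \eqref{u'in} (the delicate case, where the interface carrying the discontinuous coefficient moves) and treats only the outer problem directly, which is easy precisely because $\Phi_{\rm out}$ vanishes near $\pa D_0$: at every fixed point $\sg_t=\sg_0$, so one may differentiate $-\dv(\sg_t\gr u_t)=1$ distributionally in $\Om_0$ and obtain the first three conditions of \eqref{u'out} in one stroke, the Dirichlet condition then following from \eqref{u'a} exactly as in your argument. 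You instead derive the full transmission problem satisfied by $u'$ for a general $\Phi$ --- interior harmonicity pointwise, the Dirichlet-type conditions from the continuity of $\dot u$ together with \eqref{u'a}, and the flux condition from the pulled-back variational formulation --- and decompose \emph{afterwards}, using that the data split according to the supports of $h_{\rm in}$ and $h_{\rm out}$ and that the limiting transmission problem is linear and uniquely solvable (uniqueness is immediate: with zero data the solution lies in $H_0^1(\Om_0)$ and satisfies $\dv(\sg_0\gr w)=0$ weakly, hence vanishes). What your route buys is self-containedness: it does not lean on the earlier reference, and it makes explicit that the decomposition is a consequence of well-posedness of the limit problem rather than of linearity of the derivative map. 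The one caveat is that the step you rightly flag as delicate --- checking that, after substituting $\dot u=u'+\gr u\cdot h$ into the differentiated weak form, the interface contributions cancel to leave exactly $[\sg_0\pa_n u']=0$ --- is stated as ``the main work'' but not carried out; that computation (phase-by-phase integration by parts with $\dot A=(\dv h)\,\id-Dh-(Dh)^T$, using the equation satisfied by $u$) is precisely the content of the result the paper cites, so your plan is sound, but as written the technical core of the moving-interface case remains to be executed.
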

\begin{proof}
By linearity, we get $u'=u'_\ri+u'_\ro$, where $u'_\ri$ and $u'_\ro$ are the shape derivatives of the state function $u$ computed with respect to the perturbation fields $\Phi_\ri$ and $\Phi_\ro$ respectively. 
The computation of $\Phi_\ri$ has been carried out in \cite[Proposition 2.3]{cava} and therefore we will refer to it for a proof of \eqref{u'in}.
Now we will give a proof of \eqref{u'out}. We compute the shape derivative $u'_\ro$ of $u$ with respect to $\Phi_\ro$. To this end, first we formally differentiate the equation $-\dv(\sg_t \gr u_t)=1$ in $\Om_t$ with respect to the variable $t$
(here $\sg_t$, $u_t$ and $\Om_t$ are defined according to the perturbation field $\Phi_\ro(t)$).
We get $\dv (\sg_0 \gr u'_\ro)=0$ in $\Om_0$, which is equivalent to the first three equalities in \eqref{u'out}. This formal calculation can be justified rigorously as done in \cite[Proposition 2.3]{cava} (nevertheless, in this case we do not need to split the domain $\Om_t$ as $D_0\cup\Om_t\setminus\ol{D_0}$ because $\Phi_\ro$ vanishes on a neighborhood of the interface $\pa D_0$). Finally, the boundary condition in \eqref{u'out} is derived by differentiating the constant Dirichlet boundary condition on $\partial \Omega_t$ by means of \eqref{u'a}. 
\end{proof}

In the final part of this subsection we will present an explicit way to express the solutions to problems \eqref{u'in} and \eqref{u'out} that will be used in Subsections \ref{ssec 4.2} and \ref{ssec 4.3}. We will employ the use of spherical harmonics $Y_{k,i}$, i.e. the solutions to the eigenvalue problem
$-\De_\tau Y_{k,i}:=-\dv_\tau(\gr_\tau Y_{k,i})= \lambda_{k} Y_{k,i}$ on $\SS^{N-1}$, for $k\in\{0,1,\dots\}$ and $i\in\{1,\dots,d_k\}$,
where 
$d_k$ is the multiplicity of the $k$-th eigenvalue $\la_k=k(N+k-2)$. We will also impose the normalization $\norm{Y_{k,i}}_{L^2(\SS^{N-1})}=1$, so that the family $(Y_{k,i})_{k,i}$ becomes a complete orthonormal system of $L^2(\SS^{N-1})$.
\begin{proposition}\label{prop u' kepeken sphar}
Take $\Phi\in\A^*$. Using the same notation of Proposition \ref{prop u'}, suppose that, for some real constants $\al_{k,i}^\ri$ and $\al_{k,i}^\ro$ (the indexes $k$ and $i$ ranging as before), the following expansions hold  for all $\te\in\SS^{N-1}$
\begin{equation}\label{h_in h_out exp}
 (h_{\ri}\cdot n)(R\te)=\sum_{k=1}^\infty \sum_{i=1}^{d_k} \al_{k,i}^\ri \,Y_{k,i}(\te),	\quad
( h_{\ro}\cdot n)(\te)=\sum_{k=1}^\infty \sum_{i=1}^{d_k} \al_{k,i}^\ro\, Y_{k,i}(\te).
\end{equation}
Then, the shape dervatives $u'_\ri$ and $u'_\ro$ admit the following explicit expression for $\te\in\SS^{N-1}$ and $\ve\in\{\ri,\ro\}$:
$$
u'_\ve(r\te)=\begin{cases}
\displaystyle \sum_{k=1}^\infty\sum_{i=1}^{d_k}\al_{k,i}^\ve \, B_k^\ve r^k\, Y_{k,i}(\te) \quad &\mbox{ for } r\in[0,r],\\
\displaystyle \sum_{k=1}^\infty\sum_{i=1}^{d_k}\al_{k,i}^\ve \left( C_k^\ve r^{2-N-k} + D_k^\ve r^k \right) Y_{k,i} (\te) \quad &\mbox{ for } r\in(R,1],
\end{cases}
$$
where the constants $B_k^\ve$, $C_k^\ve$ and $D_k^\ve$ are defined as follows:
\begin{equation*}
\begin{aligned}
&B_k^\ri = (1-\sg)R^{-k+1}\left( (N-2+k)R^{2-N-2k}\right)/F, \quad\quad  
C_k^\ri = -D_k^\ri = (\sg-1)k R^{-k+1}/F, \\
&B_k^\ro = (N-2+2k)R^{2-N-2k}/F, \quad C_k^\ro = (1-\sg)k/F,\quad D_k^\ro = (N-2+k+k\sg)R^{2-N-2k}/F,
\end{aligned}
\end{equation*}
and the common denominator $F= N(N-2+k+k\sg)R^{2-N-2k}+k N (1-\sg)>0$.
\end{proposition}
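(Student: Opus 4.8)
The plan is to solve each of the two boundary value problems \eqref{u'in} and \eqref{u'out} explicitly by separation of variables, reducing the PDEs to a family of ODEs indexed by the spherical-harmonic degree $k$. Since both $u'_\ri$ and $u'_\ro$ are harmonic in $D_0$ and in $\Om_0\setminus\ol{D_0}$ separately, I would expand each of them in spherical harmonics on every sphere $\{|x|=r\}$, writing $u'_\ve(r\theta)=\sum_{k,i} f_{k,i}^\ve(r)\,Y_{k,i}(\theta)$. The key fact is that the Laplacian separates as $\De = \pa_r^2 + \frac{N-1}{r}\pa_r - \frac{1}{r^2}(-\De_\tau)$, and since $-\De_\tau Y_{k,i}=\la_k Y_{k,i}$ with $\la_k=k(N+k-2)$, harmonicity forces each radial coefficient $f_{k,i}^\ve$ to satisfy the Euler equation $r^2 f'' + (N-1)r f' - \la_k f = 0$. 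The two linearly independent solutions of this ODE are $r^k$ and $r^{2-N-k}$; on the inner ball $[0,R]$ only $r^k$ is admissible (the other blows up at the origin), which explains the single-term form there, while on the annulus $(R,1]$ both appear.

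Next I would impose the boundary and transmission conditions to fix the coefficients. By linearity and the orthonormality of the $Y_{k,i}$, the problem decouples across the index pair $(k,i)$, so it suffices to work one degree at a time, and the coefficient $\al_{k,i}^\ve$ from the expansion \eqref{h_in h_out exp} factors out of each radial coefficient. For $u'_\ri$ I would write $f_{k,i}^\ri(r)=\al_{k,i}^\ri B_k^\ri r^k$ on $[0,R]$ and $f_{k,i}^\ri(r)=\al_{k,i}^\ri(C_k^\ri r^{2-N-k}+D_k^\ri r^k)$ on $(R,1]$, then enforce the four conditions in \eqref{u'in}: vanishing on $\pa\Om_0$ (at $r=1$) gives $C_k^\ri+D_k^\ri=0$; the jump of the trace $[u'_\ri]=-[\pa_n u]\,h_\ri\cdot n$ and the transmission condition $[\sg_0\pa_n u'_\ri]=0$ give two more linear relations once I compute $[\pa_n u]$ from the explicit solution \eqref{u}. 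The analogous four conditions from \eqref{u'out} pin down $B_k^\ro$, $C_k^\ro$, $D_k^\ro$, with the inhomogeneity now sitting in the outer Dirichlet condition $u'_\ro=-\pa_n u\,h_\ro\cdot n$ at $r=1$.

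The computation of $[\pa_n u]$ is the linchpin. From \eqref{u} one reads off $\pa_n u = \pa_r u$, so $(\pa_n u)_- = -R/(N\sg)$ and $(\pa_n u)_+ = -R/N$ at $|x|=R$, whence $[\pa_n u] = (R/N)(1/\sg - 1)\cdot(-1)$, which I would simplify carefully keeping track of the sign convention $[f]=f_+-f_-$. Feeding this into the linear systems produces, for each $k$, a $3\times 3$ (resp.\ after eliminating $C_k^\ri=-D_k^\ri$, a $2\times 2$) linear system whose solution is exactly the claimed closed form, with the common denominator $F=N(N-2+k+k\sg)R^{2-N-2k}+kN(1-\sg)$. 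Verifying $F>0$ for $k\ge 1$ and $\sg>0$ is a short estimate: for $\sg\ge 1$ every term is manifestly nonnegative and the first is strictly positive, while for $0<\sg<1$ one groups the $\sg$-dependent terms and uses $N-2+k\ge k$ together with $R^{2-N-2k}>0$ to absorb the negative contribution. The main obstacle is purely bookkeeping: solving the coupled transmission systems without sign or exponent errors and confirming that the resulting rational expressions collapse to the stated $B,C,D$ constants. A clean way to reduce the risk of error is to exploit the structure, namely that $C_k^\ri=-D_k^\ri$ halves the work for the interior-driven problem, and to double-check the final formulas by verifying the continuity relation $C_k^\ve+D_k^\ve = B_k^\ve R^{2k+N-2}$ (trace continuity at $r=R$, which holds for $u'_\ro$ and in the appropriately jumped form for $u'_\ri$) before declaring the coefficients correct.
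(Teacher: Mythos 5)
Your proposal follows essentially the same route as the paper: the paper's ``proof'' is exactly a separation-of-variables computation, outsourced to \cite[Section 4]{cava} for the interior problem \eqref{u'in} and declared ``completely analogous'' for \eqref{u'out}, and your plan (spherical-harmonic expansion, the Euler equation $r^2f''+(N-1)rf'-\la_k f=0$ with fundamental solutions $r^k$ and $r^{2-N-k}$, regularity at the origin, then mode-by-mode transmission systems) is precisely that computation, carried out rather than cited. The strategy is sound and decouples across $(k,i)$ exactly as you say.

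However, three of the formulas you propose to use are wrong as written, and since you correctly identify bookkeeping as the main risk, they matter. First, with $[f]=f_+-f_-$ and your (correct) one-sided traces, $[\pa_n u]=-\tfrac{R}{N}+\tfrac{R}{N\sg}=\tfrac{R(1-\sg)}{N\sg}$; the extra factor $(-1)$ you append flips this sign and would propagate into sign errors throughout the interior-driven system. Second, your proposed sanity check $C_k^\ve+D_k^\ve=B_k^\ve R^{2k+N-2}$ is not trace continuity: continuity at $r=R$ reads $C_k^\ro R^{2-N-k}+D_k^\ro R^k=B_k^\ro R^k$, i.e.\ $C_k^\ro R^{2-N-2k}+D_k^\ro=B_k^\ro$, whereas $C_k^\ro+D_k^\ro=1/N$ is the Dirichlet datum at $r=1$; used literally, your check would ``refute'' the correct coefficients. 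Third, your positivity argument for $F$ is backwards: for $0<\sg<1$ both summands of $F$ are positive and there is nothing to absorb, while for $\sg>1$ the summand $kN(1-\sg)$ is negative and one needs $R^{2-N-2k}>1$ to conclude $F> N(N-2+2k)>0$.

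One further caveat on your claim that the linear systems collapse to ``exactly the claimed closed form.'' A correct execution of your plan gives $C_k^\ri=-D_k^\ri=(\sg-1)kR^{1-k}/F$ as printed, but
$B_k^\ri=(1-\sg)R^{1-k}\bigl((N-2+k)R^{2-N-2k}+k\bigr)\big/(\sg F)$,
which one can verify satisfies all the conditions of \eqref{u'in}, while the printed $B_k^\ri$ (missing the summand $k$ and the factor $1/\sg$) fails the flux condition $[\sg_0\pa_n u'_\ri]=0$ unless $\sg=1$. So the printed coefficient appears to carry a typo; the other five coefficients, including all three for $u'_\ro$, do match a correct execution of your scheme.
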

For the details of the proof we refer to \cite[Section 4]{cava}, where the explicit expression of $u'_\ri$ has been derived from \eqref{u'in} by separation of variables. Since the derivation of $u'_\ro$ is completely analogous, we will omit the proof alltogether.  
\subsection{Computation of $E'$}\label{ssec 3.2}
\begin{theorem}\label{l1}
For all $\Phi\in\A$ we have
\begin{equation}\label{E'}
E'(\Phi)= E'_\ri(\Phi)+E'_\ro(\Phi) = -\int_{\pa D_0} [\sg_0 |\gr u|^2]\,h_\ri \cdot n + \int_{\pa\Om} |\gr u|^2\, h_\ro \cdot n.
\end{equation}
In particular, if $\Phi$ satisfies \eqref{1st} on  both $\pa D_0$ and $\pa\Om_0$, then $E'(\Phi)=0$.
\end{theorem}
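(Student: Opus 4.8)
The plan is to exploit the self-adjoint (energy) structure of the torsion problem, for which the shape derivative of the state function does not survive in the final expression of $E'$ — exactly as the structure theorem \eqref{expansion} predicts. By linearity of the map $\Phi\mapsto E'(\Phi)$ together with the decomposition $\Phi=\Phi_\ri+\Phi_\ro$ and $u'=u'_\ri+u'_\ro$ from Proposition \ref{prop u'}, it suffices to treat the two contributions $E'_\ri$ and $E'_\ro$ separately and add them. In each case the crux will be to show that the volume term $\int_{\Om}\sg_0\,\gr u\cdot\gr u'_\ve$ (with $\ve\in\{\ri,\ro\}$) vanishes, leaving only the boundary integral.

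For the outer contribution I would differentiate $E(D_0,\Om_t)=\int_{\Om_t}\sg_{D_0,\Om_t}\abs{\gr u_t}^2$ directly, the interface $\pa D_0$ being frozen. The Reynolds/Hadamard transport formula yields a volume term $2\int_{\Om}\sg_0\,\gr u\cdot\gr u'_\ro$ plus the boundary contribution $\int_{\pa\Om}\sg_0\abs{\gr u}^2\,h_\ro\cdot n$; since $\sg_0\equiv1$ on $\pa\Om_0$ the latter is already the desired $\int_{\pa\Om}\abs{\gr u}^2\,h_\ro\cdot n$. To kill the volume term I would integrate by parts on each phase, using that $u'_\ro$ is harmonic in $D_0$ and in $\Om_0\setminus\ol{D_0}$. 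The outer boundary contributes nothing because $u=0$ on $\pa\Om_0$; on $\pa D_0$ the interface terms collapse thanks to the continuity of $u$ together with the transmission condition $[\sg_0\,\pa_n u'_\ro]=0$ from \eqref{u'out}, so that $\int_{\Om}\sg_0\,\gr u\cdot\gr u'_\ro=0$.

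For the inner contribution the interface $\pa D_0$ moves, so I would split $E(D_t,\Om_0)=\int_{D_t}\sg\abs{\gr u_t}^2+\int_{\Om_0\setminus D_t}\abs{\gr u_t}^2$ and apply the transport formula to each piece. The moving interface now produces two boundary terms carrying the one-sided traces of $\abs{\gr u}^2$; with $n$ fixed as the outward normal to $D_0$ they combine into $\int_{\pa D_0}\big(\sg(\abs{\gr u}^2)_- -(\abs{\gr u}^2)_+\big)h_\ri\cdot n=-\int_{\pa D_0}[\sg_0\abs{\gr u}^2]\,h_\ri\cdot n$, exactly the first term in \eqref{E'}. The remaining volume term $2\int_{\Om}\sg_0\,\gr u\cdot\gr u'_\ri$ vanishes by the same integration-by-parts identity as above, using this time the continuity of $u$ across $\pa D_0$ and the transmission condition $[\sg_0\,\pa_n u'_\ri]=0$ from \eqref{u'in}.

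Adding the two contributions gives \eqref{E'}. For the final assertion I would invoke the radial symmetry of $u$ in \eqref{u}: since $u$ depends on $\abs{x}$ alone, $\abs{\gr u}$ is constant on every centered sphere, so $[\sg_0\abs{\gr u}^2]$ is a constant on $\pa D_0$ and $\abs{\gr u}^2$ is a constant on $\pa\Om_0$. Pulling these constants out, the two integrals in \eqref{E'} reduce to multiples of $\int_{\pa D_0}h_\ri\cdot n$ and $\int_{\pa\Om_0}h_\ro\cdot n$, both of which vanish by the first-order volume-preserving condition \eqref{1st}; hence $E'(\Phi)=0$. The main obstacle I anticipate is the careful bookkeeping across the interface $\pa D_0$ — tracking the one-sided traces, the orientation of $n$, and the two transmission conditions — and, for $E'_\ri$, correctly accounting for the jump of $\sg_0$ across the \emph{moving} surface $\pa D_t$ in the transport formula; the rigorous justification of this differentiation for the merely piecewise-smooth $u$ proceeds as in \cite[Proposition 2.3]{cava}.
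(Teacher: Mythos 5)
Your proposal is correct and follows essentially the same route as the paper: decompose $\Phi=\Phi_\ri+\Phi_\ro$, obtain $E'_\ro$ from the Hadamard/transport formula and kill the volume term $2\int_{\Om}\sg_0\,\gr u\cdot\gr u'_\ro$ by integration by parts against $u$ (using $u=0$ on $\pa\Om_0$, the continuity of $u$ and $[\sg_0\,\pa_n u'_\ro]=0$), then conclude from the constancy of $\abs{\gr u}$ on $\pa D_0$ and $\pa\Om_0$ together with \eqref{1st}. The only difference is that for the inner contribution the paper simply cites \cite[Theorem 2.4]{cava}, whereas you rederive it via the phase-by-phase transport formula; your bookkeeping of one-sided traces, of the orientation of $n$, and of the jump $[\sg_0\abs{\gr u}^2]$ is consistent with the paper's conventions, so this amounts to a correct filling-in of the cited step rather than a different argument.
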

\begin{proof}
First, we decompose $\Phi$ as $\Phi_\ri+\Phi_\ro$ for some $\Phi_\ri\in\A_\ri$ and $\Phi_\ro\in\A_\ro$. By the general theory we know that $E'$ can be written as a linear form $l_1^E$ of $h\cdot n$, thus
$$
E'(\Phi)=l_1^E(h\cdot n)= l_1^E(h_\ri\cdot n)+l_1^E(h_\ro\cdot n).
$$
We refer to \cite[Theorem 2.4]{cava}, where the linear form $l_1^E(h_\ri\cdot n)$ has been computed in detail: this yields the first summand of \eqref{E'}. 

We will now compute $l_1^E(h_\ro\cdot n)$, the shape derivative of $E$ with respect to $\Phi_\ro$. 
To this end we apply the Hadamard formula (see \cite[Corollary 5.2.8, p. 176]{henrot}) to $E(D_0,\Omega_t)=\int_{\Omega_t} \sg_{D_0,\Omega_t} \abs{\gr u_t}^2$. We get
$$
l_1^E(h_\ro \cdot n)=2\int_\Om \sg_0 \gr u \cdot \gr u'_\ro + \int_{\pa\Om_0} \abs{\partial_n u}^2\,h_\ro \cdot n.
$$
Finally, integrating the first equation of \eqref{u'out} against $u$ yields $\int_\Om \sg_0 \gr u'_\ro\cdot \gr u=0$ and thus \eqref{E'} is proved. 

Now, as $|\gr u|$ is constant on both $\pa D_0$ and $\pa \Om_0$, it is immediate to see that $E'(\Phi)$ vanishes when both $h_\ri$ and $h_\ro$ verify \eqref{1st} (thus, in particular, for all $\Phi\in\A^*$).
\end{proof}

\section{$2^{\rm nd}$ order shape derivative} \label{sec 4}\setcounter{equation}{0}
\subsection{Computation of $E''$}\label{ssec 4.1}
The computation of $E''(\Phi)=l_2^E(h\cdot n,h\cdot n)+ l_1^E(Z)$ for an arbitrary $\Phi\in\A^*$ will require two steps. First, we will compute the bilinear form $l_2^E(h\cdot n, h\cdot n)$ with the aid of special perturbations (called {\em Hadamard perturbations} in the literature) and finally we will take care of the term involving $Z$, using the second order volume-preserving condition \eqref{2nd}.  
\begin{proposition}\label{prop l2e}
Let $\Phi\in\A$. Then, the term $l_2^E$ that appears in the expansion \eqref{expansion} admits the following explicit expression:
\begin{equation}\nonumber
\begin{aligned}
&l_2^E(h\cdot  n, h\cdot n)=2\int_{\pa \Om_0}  \gr u \cdot \gr u' \,(h\cdot n) + 2\int_{\pa \Om_0}  \pa_n u \,\pa_{nn} u (h\cdot n)^2 + \int_{\pa \Om_0} |\gr u|^2 H (h\cdot n)^2 \\
& -2\int_{\pa D_0} \left[ \sg_0 \gr u \cdot \gr u'\right] (h\cdot n) - 2\int_{\pa D_0}  \sg\pa_n u_- [\pa_{nn} u] (h\cdot n)^2 - \int_{\pa D_0} \left[ \sg_0 |\gr u|^2\right] H (h\cdot n)^2.
\end{aligned}
\end{equation}
\end{proposition}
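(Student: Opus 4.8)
The plan is to compute the bilinear form by restricting to \emph{Hadamard perturbations}, i.e.\ deformation fields whose velocity is purely normal along $\pa D_0\cup\pa\Om_0$, chosen so that the tangential acceleration term $Z$ of \eqref{Z} does not contribute. For such perturbations the structure formula \eqref{expansion} reduces to $E''(\Phi)=l_2^E(h\cdot n,h\cdot n)$, so it suffices to compute the full second derivative $E''$ along a normal flow; since the structure theorem guarantees that $l_2^E$ depends only on the normal trace $h\cdot n$, this determines the bilinear form for every $\Phi\in\A$.

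The mechanism is to differentiate the first--order formula of Theorem \ref{l1} once more along the flow. Up to the vanishing of the cross term $\int_{\Om}\sg_0\gr u\cdot\gr u'$, that theorem exhibits the first derivative of $E(D_t,\Om_t)$ as a boundary integral with kernels $G_\Om:=|\gr u|^2$ on $\pa\Om_0$ and $G_D:=-[\sg_0|\gr u|^2]$ on $\pa D_0$. Differentiating a moving surface integral $\int_{\pa\om_t}G_t$ a second time produces, on each boundary component, exactly three contributions: the shape derivative $G'$ of the kernel, its normal derivative $\pa_n G$, and a mean--curvature term $H\,G$, each paired with the appropriate power of $h\cdot n$. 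This leads to the intermediate identity
$$
l_2^E(h\cdot n,h\cdot n)=\int_{\pa\Om_0}\big[G_\Om'\,(h\cdot n)+(\pa_n G_\Om+HG_\Om)(h\cdot n)^2\big]+\int_{\pa D_0}\big[G_D'\,(h\cdot n)+(\pa_n G_D+HG_D)(h\cdot n)^2\big].
$$

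It then remains to evaluate the three kernel quantities on each boundary. Because $u_0$ is radial, $\gr u$ is parallel to $n$ on both $\pa D_0$ and $\pa\Om_0$; hence the shape derivative of $|\gr u|^2$ is $2\,\gr u\cdot\gr u'$ and its normal derivative is $\pa_n(|\gr u|^2)=2\,\pa_n u\,\pa_{nn}u$. On $\pa\Om_0$ this immediately yields the first three terms of the claimed expression. On the interface the only delicate point is $\pa_n G_D=-\pa_n[\sg_0|\gr u|^2]=-2[\sg_0\,\pa_n u\,\pa_{nn}u]$; invoking the transmission conditions $[\sg_0\pa_n u]=0$ (so that $\pa_n u_+=\sg\,\pa_n u_-$) and the definition $[\sg_0|\gr u|^2]=|\gr u|^2_+-\sg|\gr u|^2_-$, this jump collapses to $-2\,\sg\,\pa_n u_-[\pa_{nn}u]$, producing the remaining interface terms and completing the match.

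The main obstacle is the rigorous handling of the moving two--phase interface $\pa D_t$. One must justify differentiating the domain integral $\int_{\Om_t}\sg_t|\gr u_t|^2$ across the jump set by splitting it into an integral over $D_t$ and one over $\Om_t\setminus\ol{D_t}$, so that the boundary term on $\pa D_t$ appears with the correct orientation and assembles into the jump $-[\sg_0|\gr u|^2]$; simultaneously one must check that the volume (cross) term $2\int_{\Om_t}\sg_t\gr u_t\cdot\gr u_t'$ vanishes at each $t$ by the energy identity, exactly as in the proof of Theorem \ref{l1}. Keeping careful track of which trace ($+$ or $-$) each factor carries, and of the interplay between the shape derivative $G'$ and the material derivative implicit in the moving--surface formula, is where the bookkeeping is heaviest; by contrast, the geometric curvature terms follow directly from the second Hadamard formula.
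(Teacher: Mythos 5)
Your proposal is correct and follows the paper's overall strategy: restrict to Hadamard perturbations $\Phi(t)=t(h\cdot n)n$ so that the $Z$-term of \eqref{Z} drops and $E''(\Phi)=l_2^E(h\cdot n,h\cdot n)$, then differentiate the boundary-integral formula of Theorem \ref{l1} along the flow, checking that the cross term $\int_{\Om_t}\sg_t\gr u_t\cdot\gr u'_t$ still vanishes for each $t$. Where you genuinely diverge is the device used to differentiate the moving surface integrals. The paper deliberately avoids any surface transport formula: it pulls each boundary term back to a volume integral via the divergence theorem (splitting at the interface into $D_t$ and $\Om_t\setminus\ol{D_t}$, which fixes the orientation and assembles the jump, exactly the point you flag as delicate), applies the Hadamard volume formula, and recovers the curvature term from the tangential-calculus identity $\dv_\tau\big((h\cdot n)n\big)=H\,(h\cdot n)$. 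You instead differentiate the surface integral directly, asserting that only the three contributions $G'$, $\pa_n G$, $HG$ appear. That is true, but it is not automatic: the integrand in Theorem \ref{l1} transported to time $t$ is $G_t\,\{h\circ(\id+\Phi(t))^{-1}\}\cdot n_t$, so a priori the derivative also produces terms involving the variation of the transported field and of $n_t$; these cancel precisely because the material derivative of $\{h\circ(\id+\Phi(t))^{-1}\}\cdot n_t$ is $h\cdot\dot n$, which vanishes for Hadamard fields ($\dot n=-\gr_\tau(h\cdot n)$ is tangential while $h$ is normal). Your route is shorter once this standard fact is granted; the paper's detour is designed so that it never has to be invoked. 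Granting it, your kernel evaluations --- including the reduction $[\sg_0\,\pa_n u\,\pa_{nn}u]=\sg\,\pa_n u_-[\pa_{nn}u]$ via the transmission condition $[\sg_0\pa_n u]=0$, a step the paper leaves implicit in ``combining $(A)$ and $(B)$'' --- reproduce the stated expression term by term.
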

\begin{proof}
We will follow the steps of \cite[Subsection 5.9.6, pp. 226--227]{henrot}. We know, by \eqref{expansion}, that $E''(\Phi)$ can be written as the sum of a quadratic form $l_2^E$ of $h\cdot n$ and a linear form $l_1^E$ of the function $Z$ defined in \eqref{Z}. 
Moreover (see  \cite[Corollary 5.9.4, p. 221]{henrot} or \cite{structure}) it is woth noticing that $Z$ vanishes in the special case when 
\begin{equation}\label{hadamard pert}
\Phi(t)=t\,(h\cdot n )n \quad \mbox{ on }\pa D_0\cup \pa\Om_0.
\end{equation}
In other words, $E''(\Phi)=l_2^E(h\cdot n,h\cdot n)$ for all perturbations $\Phi\in\A$ satisfying \eqref{hadamard pert}.
Therefore, for all $\Phi\in\A$ satisfying \eqref{hadamard pert}, by employing the explicit form of the first order shape derivative given by Theorem \ref{l1}, we can write
\begin{equation}\nonumber
\begin{aligned}
l_2^E(h\cdot n, h\cdot n)=
 \restr{\frac{d}{dt}}{t=0}\int_{\pa D_t} \left[\sg |\gr u_t|^2\right]\left\{ h \circ \left(\id+\Phi(t)\right)^{-1}\right\}\cdot n_t \\
 +  \restr{\frac{d}{dt}}{t=0} \int_{\pa \Om_t} \sg |\gr u_t|^2\left\{ h \circ \left(\id+\Phi(t)\right)^{-1}\right\}\cdot n_t,
\end{aligned}
\end{equation}
where $n_t$ denotes the outward unit normal to both $\pa D_t$ and $\pa \Om_t$. 
The formula above can be rewritten in the following compact way:
\begin{equation}\label{(A) and (B)}
l_2^E(h\cdot n, h\cdot n)= \underbrace{\restr{\frac{d}{dt}}{t=0} \int_{\pa D_t} \!\!\! f(t)\, \left\{h\circ \left( \id+\Phi(t)\right)^{-1}\right\}\cdot n_t^1}_{(A)} + \underbrace{\restr{\frac{d}{dt}}{t=0} \int_{\pa (\Om_t\setminus \ol{D_t})}\!\!\! f(t)\, \left\{h\circ \left( \id+\Phi(t)\right)^{-1}\right\}\cdot n_t^2}_{(B)},
\end{equation}
where $f(t):=\sg_t |\gr u_t|^2$, and $n_t^1$ (respectively $n_t^2$) denotes the unit normal vector to $\pa D_t$ (respectively $\pa (\Om_t\setminus \ol{D_t})$) pointing in the outward direction with respect to the domain $D_t$ (respectively $\Om_t\setminus \ol{D_t}$). Moreover, the surface integrals in the above have to be intended in the sense of traces, taken from the inside of the respective domains.
We first deal with the term $(A)$ of \eqref{(A) and (B)}. We get 
$$
(A)= \restr{\frac{d}{dt}}{t=0} \int_{D_t} \dv\left( f(t)\, h\circ \left(\id+\Phi(t)\right)^{-1}\right),
$$
The divergence theorem, followed by an application of the Hadamard formula (see \cite[Corollary 5.2.8, p. 176]{henrot}), yields 
$$
(A)= \int_{D_0} \restr{\frac{\pa}{\pa t}}{t=0} \dv\left( f(t) \, h\circ\left(\id+ \Phi(t)\right)^{-1}\right) + \int_{\pa D_0} \dv\left( f(0) h\right) \, h\cdot n=(A1)+(A2).
$$
We have
\begin{equation}\nonumber
\begin{aligned}
(A1)= 
\int_{\pa D_0} f' h \cdot n - \int_{\pa D_0} f(0) \left(D h \, h\right)\cdot n,
\quad (A2)= \int_{\pa D_0} \left( \gr f(0)\cdot h + f(0)\dv h\right) h\cdot n.
\end{aligned} 
\end{equation}
Moreover, as $h=(h\cdot n)n$ on $\pa D_0$ by hypothesis, we get
\begin{equation}\label{(A) ii kanji}
(A)=\int_{\pa D_0} f' h\cdot n+ \int_{\pa D_0} \pa_n f(0) (h\cdot n)^2
+ \int_{\pa D_0} f(0)\left(\dv h - n\cdot (Dh \, h)\right)\, h\cdot n.
\end{equation}

Now, by the definition of tangential divergence in \eqref{diffop}, \eqref{hadamard pert} and \cite[equation (5.22) p. 366]{SG} we obtain:
$
\dv h - n\cdot (Dh \, n)= \dv_\tau h =\dv_\tau \big( (h\cdot n) n\big)= H \, h\cdot n
$.

Recalling the definition of $f(t)$, we can rewrite \eqref{(A) ii kanji} as follows
$$
(A)= 2\int_{\pa D_0}\sg \gr u_- \cdot \gr u'_- \, (h\cdot n) + 2\int_{\pa D_0} \sg \pa_n u_- (\pa_{nn} u _-) (h\cdot n)^2+\int_{\pa D_0}\sg |\gr u_-|^2 H (h\cdot n)^2.
$$
The term labelled $(B)$ in \eqref{(A) and (B)} can be computed analogously. The claim of Proposition \ref{prop l2e} is finally obtained by combining the two terms $(A)$ and $(B)$.
\end{proof}
In light of the structure formula \eqref{expansion}, the following theorem is an immediate consequence of Proposition \ref{prop l2e} and the second order volume-preserving condition \eqref{2nd} combined.
\begin{theorem}\label{thm E''=}
For all $\Phi\in\A^*$, the following holds:    
\begin{equation}\nonumber
\begin{aligned}
E''(\Phi)=&+2\int_{\pa \Om_0}  \gr u \cdot \gr u' \,(h\cdot n) + 2\int_{\pa \Om_0}  \pa_n u \,\pa_{nn} u (h\cdot n)^2 +\\
& -2\int_{\pa D_0} \left[ \sg_0 \gr u \cdot \gr u'\right] (h\cdot n) - 2\int_{\pa D_0}  \sg\pa_n u_- [\pa_{nn} u] (h\cdot n)^2.
\end{aligned}
\end{equation}
\end{theorem}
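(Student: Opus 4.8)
The plan is to read the decomposition $E''(\Phi) = l_2^E(h\cdot n, h\cdot n) + l_1^E(Z)$ straight off the structure formula \eqref{expansion}. Since Proposition \ref{prop l2e} already provides the explicit value of $l_2^E(h\cdot n, h\cdot n)$, the whole task reduces to evaluating the acceleration term $l_1^E(Z)$ and checking that it precisely cancels the two mean-curvature contributions $\int_{\pa\Om_0}|\gr u|^2 H (h\cdot n)^2$ and $-\int_{\pa D_0}[\sg_0|\gr u|^2]H(h\cdot n)^2$ that appear in Proposition \ref{prop l2e}. No direct computation of $Z$ from its defining formula \eqref{Z} will be needed.

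The first concrete step is to extract the linear form $l_1^E$ from Theorem \ref{l1}. Since $E'(\Phi) = l_1^E(h\cdot n)$ equals $-\int_{\pa D_0}[\sg_0|\gr u|^2](h\cdot n) + \int_{\pa\Om_0}|\gr u|^2(h\cdot n)$, the same linear form evaluated on an arbitrary boundary density $g$ must read $l_1^E(g) = -\int_{\pa D_0}[\sg_0|\gr u|^2]\,g + \int_{\pa\Om_0}|\gr u|^2\,g$, and I would apply this with $g = Z$. The crucial observation is that, by the radial symmetry of $u$ in \eqref{u}, both the jump $[\sg_0|\gr u|^2]$ on $\pa D_0$ and the value $|\gr u|^2$ on $\pa\Om_0$ are \emph{constants}, as is the mean curvature $H$ on each of the two spheres; this lets me pull all these quantities outside the corresponding integrals of $Z$, so that $l_1^E(Z) = -[\sg_0|\gr u|^2]\int_{\pa D_0}Z + |\gr u|^2\int_{\pa\Om_0}Z$.

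Finally, I would invoke the second order volume-preserving condition \eqref{2nd}, applied separately to $\omega = D_0$ and $\omega = \Om_0$, to substitute $\int_{\pa\omega}Z = -\int_{\pa\omega}H(h\cdot n)^2$. Reinserting the constants, this turns $l_1^E(Z)$ into exactly $\int_{\pa D_0}[\sg_0|\gr u|^2]H(h\cdot n)^2 - \int_{\pa\Om_0}|\gr u|^2 H(h\cdot n)^2$, which cancels the two curvature terms of Proposition \ref{prop l2e} and leaves precisely the claimed formula for $E''(\Phi)$. The algebra is short; the only points that genuinely require care are the bookkeeping of the support decomposition $\Phi = \Phi_\ri + \Phi_\ro$ (so that the density $Z$ on $\pa D_0$ is governed by $h_\ri$ and on $\pa\Om_0$ by $h_\ro$, which is what legitimizes splitting $l_1^E(Z)$ over the two boundary components), and the recognition that it is exactly the radial geometry — making $|\gr u|^2$ and $H$ constant on each sphere — that triggers the cancellation against \eqref{2nd}. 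This last point is the real content of the argument rather than an obstacle in the computational sense.
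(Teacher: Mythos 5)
Your proposal is correct and is exactly the argument the paper intends: the paper dispatches this theorem as an ``immediate consequence'' of Proposition \ref{prop l2e} and condition \eqref{2nd}, and your write-up supplies precisely the omitted details (reading $l_1^E$ off Theorem \ref{l1}, using radial symmetry to pull the constants $[\sg_0|\gr u|^2]$, $|\gr u|^2$, $H$ out of the integrals, and substituting $\int_{\pa\omega}Z=-\int_{\pa\omega}H(h\cdot n)^2$ to cancel the curvature terms). No gaps; your emphasis on constancy being what licenses passing from the integral identity \eqref{2nd} to the weighted cancellation is exactly the right point of care.
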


\subsection{Analysis of the non-resonant part}\label{ssec 4.2}
In this subsection we will suppose that the expansion \eqref{h_in h_out exp} holds true for $h_\ri$ and $h_\ro$. Combining the result of Theorem \ref{thm E''=} and the explicit expressions for $u$ and $u'=u'_\ri+u'_\ro$ (given by \eqref{u} and Proposition \ref{prop u' kepeken sphar} respectively) yields the following. 
\begin{equation}\label{E''=in+out+res}
E''(\Phi)=\sum_{{k=1}}^\ali\sum_{{i=1}}^{d_k}\left\{     \left(\al_{k,i}^\ri\right)^2 E''_\ri (k)  +\left(\al_{k,i}^\ro\right)^2 E''_\ro (k)  + \al_{k,i}^\ri\, \al_{k,i}^\ro \, E''_{\rm res}(k)\right\},
\end{equation}
where
\begin{equation}\label{in out res li seme}
\begin{aligned}
E''_\ri(k)&= \frac{2 R^N}{N}  \left( \frac{1-\sg}{\sg} \right) \left(F - k\left( k(1-\sg)+(N-2+k)(1-\sg)R^{2-N-2k}\right) \right)\bigg/F,\\
E''_\ro(k) &= \frac{2}{N}\left( F-k\left( (-N+2-k)(1-\sg)+(N-2+k+k\sg)R^{2-N-2k} \right)\right)\bigg/ F,\\
E''_{\rm res}(k)&= \frac{4 (\sg-1) R^{1-k}}{N}\left((N-2)k+2k^2\right)\bigg/ F,
\end{aligned}
\end{equation}
and $F$ is the term defined at the end of the statement of Proposition \ref{prop u' kepeken sphar}. 

In this subsection we will consider only the coefficients $k\in\{1,2,\dots\}, i\in\{1,\dots d_k\}$ such that $\al_{k,i}^\ri\, \al_{k,i}^\ro =0$ (in other words we will consider only the non-resonant part of $E''$).  
Under this assumption the contributions of $E''_\ri(k)$ and $E''_\ro(k)$ can be analyzed separately. We have the following result.
\begin{lemma}\label{lemma decreasing}
The functions $\NN\ni k\mapsto E''_\ve (k)$, for $\ve\in\{\ri,\ro\}$, defined by \eqref{in out res li seme}, are monotone decreasing. The function $E''_\ri$ is strictly decreasing for $\sg\ne1$ and constantly zero otherwise, while $E''_\ro$ is a strictly decreasing function for all $\sg>0$. 
\end{lemma}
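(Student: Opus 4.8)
The plan is to treat $k$ as a continuous real parameter $k\ge 1$: since a strictly decreasing function of a real variable is a fortiori strictly decreasing along the integers, it suffices to show $\frac{d}{dk}E''_\ve(k)<0$. Before differentiating I would dispose of the prefactor in $E''_\ri$. The factor $\frac{1-\sg}{\sg}$ in \eqref{in out res li seme} is independent of $k$, so when $\sg=1$ it annihilates $E''_\ri$ entirely, giving the ``constantly zero'' case at once. For $\sg\ne1$ this factor only contributes a fixed sign, namely that of $1-\sg$; I would record this sign convention so that proving $E''_\ri$ strictly decreasing becomes equivalent to proving that the accompanying rational factor is strictly monotone in the direction dictated by the sign of $1-\sg$.

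The second step is to recast each $E''_\ve$ in a form suited to differentiation. Clearing the common denominator $F$ and cancelling, one obtains for instance $E''_\ro(k)=\frac{2}{N^2}\big[(N-k)+(N-2+2k)\frac{k(1-\sg)}{\mathcal D}\big]$, where $\mathcal D:=(N-2+k+k\sg)R^{2-N-2k}+k(1-\sg)=F/N>0$, and an analogous expression for $E''_\ri$. Differentiating, the only delicate piece is $\frac{d}{dk}\frac{k(1-\sg)}{\mathcal D}$; here a pleasant simplification occurs, because in $\mathcal D$ the coefficient satisfies $(N-2+k+k\sg)-k(1+\sg)=N-2$, which collapses most of the algebra. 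Writing $a:=R^{2-N-2k}$ (so that $\ln a=\ln(1/R)\,(N-2+2k)\ge0$), the inequality $\frac{d}{dk}E''_\ro<0$ reduces, after multiplication by the positive factor $N^2\mathcal D^2/2$, to a single scalar inequality of the schematic form
$$
P^2a^2+2kP(1-\sg)\,a\ln a-(N-2)(N-2+2k)(1-\sg)a-k^2(1-\sg)^2>0,\qquad P:=N-2+k(1+\sg).
$$

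The third step is the sign analysis of this inequality, and this is where the two regimes separate. When $\sg\le1$ the logarithmic term $2kP(1-\sg)a\ln a$ is nonnegative and may be discarded; the remainder is controlled by the elementary bounds $a\ge R^{-N}>1$ and $P^2\ge(N-2)(N-2+2k)+k^2$, after which it regroups into a manifestly positive combination. The genuinely hard case is $\sg>1$: there $1-\sg<0$, so the logarithmic term is \emph{negative} and works against us, while the remaining $a$-linear term is now nonnegative and may be dropped. I would then treat the reduced inequality $P^2a^2-2kP(\sg-1)a\ln a-k^2(\sg-1)^2>0$ as a quadratic in the single quantity $w:=Pa>0$; its larger root is $k(\sg-1)\big(\ln a+\sqrt{\ln^2a+1}\big)$, so the claim becomes $Pa>k(\sg-1)\big(\ln a+\sqrt{\ln^2a+1}\big)$. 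Substituting $y:=\ln a$, using $P\ge k(1+\sg)$ and $a=e^{y}$, this follows from the purely elementary inequality $e^{y}>\tfrac{\sg-1}{\sg+1}\big(y+\sqrt{y^2+1}\big)$ for all $y\ge0$, which I would establish by checking it at $y=0$ and showing the difference is convex and increasing.

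The main obstacle is precisely this last step in the regime $\sg>1$: the competition between the exponentially large factor $a=R^{2-N-2k}$ and the logarithmic and polynomial coefficients must be resolved \emph{uniformly} in $k\ge1$ and $\sg>1$, not merely asymptotically, and it is the convexity estimate on $e^{y}$ that secures this uniformity. (An alternative route that avoids the logarithm altogether is to study the forward difference $E''_\ve(k+1)-E''_\ve(k)$, which after clearing the positive denominator becomes a quadratic in $a$ with negative leading coefficient and constant term a nonnegative multiple of $(1-\sg)^2$; one then shows that $a=R^{2-N-2k}$ lies beyond the unique positive root.) For $E''_\ri$ the same scheme applies verbatim, with the sign convention of the first step absorbing the prefactor $\frac{1-\sg}{\sg}$ and interchanging the roles of the two regimes; combining everything yields that $E''_\ro$ is strictly decreasing for every $\sg>0$, while $E''_\ri$ is strictly decreasing for $\sg\ne1$ and identically zero for $\sg=1$.
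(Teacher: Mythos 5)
Your proof is correct, but it follows a genuinely different route from the paper's, so a comparison is in order. Both you and the paper extend $k$ to a real variable and prove negativity of the derivative, and your pivotal reduction is right: writing $\cD=F/N=Pa+k(1-\sg)$, $a:=R^{2-N-2k}$, $P:=N-2+k(1+\sg)$, one indeed finds
\begin{equation*}
\frac{N^2\cD^2}{2}\,\frac{d}{dk}E''_\ro(k)=-\Big(P^2a^2+2kP(1-\sg)\,a\ln a-(N-2)(N-2+2k)(1-\sg)\,a-k^2(1-\sg)^2\Big),
\end{equation*}
so $\frac{d}{dk}E''_\ro<0$ is exactly your scalar inequality, and both of your regime arguments close: for $\sg\le1$ via $P^2>(N-2)(N-2+2k)+k^2$ and $a>1\ge 1-\sg$, and for $\sg>1$ via the larger root of the quadratic in $w=Pa$ together with $e^y\ge y+\sqrt{y^2+1}$ for $y\ge0$ and $\frac{\sg-1}{\sg+1}<1$. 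The paper organizes the computation differently: it expands the derivative as a quadratic polynomial in $\sg$,
\begin{equation*}
\frac{d}{dx}E''_\ro(x)=\frac{2\big(a(x)+\sg\,b(x)+\sg^2x^2\,c(x)\big)}{F^2}
\end{equation*}
(the coefficient $a(x)$ here is unrelated to your $a$), and shows that each of the three $\sg$-free coefficients is negative for all $x>0$ and $M:=N-2\ge0$, by evaluating at $M=0$ and differentiating in $M$ and $x$. The paper's decomposition buys uniformity in $\sg$ --- no case split on $\sg$ ever occurs, and all sign checks are on log-free expressions --- at the cost of three separate coefficient analyses; yours buys a single clean inequality with an elementary convexity endgame, at the cost of the split at $\sg=1$.

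The one imprecise point is your final claim that the scheme transfers to $E''_\ri$ ``verbatim\dots interchanging the roles of the two regimes''. (The paper does not reprove this part either; it cites \cite[Lemma 4.1]{cava}.) In fact no interchange occurs: factoring out the prefactor gives
\begin{equation*}
E''_\ri(k)=\frac{2R^N}{N\sg}\left[(1-\sg)-\frac{(1-\sg)^2}{N}\,Q(k)\right],\qquad Q(k):=\frac{k^2+k(N-2+k)a}{\cD},
\end{equation*}
hence $\frac{d}{dk}E''_\ri=-\frac{2R^N(1-\sg)^2}{N^2\sg}\,Q'(k)$: the sign flip of the prefactor cancels against that of the inner factor, and in \emph{both} regimes one must prove the same inequality $Q'(k)>0$, for all $\sg>0$. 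That inequality does hold --- the numerator of $Q'$ organizes as a manifestly positive term plus $\sg k^2\left(a^2-1-2a\ln a\right)$, and $a^2-1-2a\ln a>0$ for $a>1$ --- so your conclusion stands, but it requires this separate computation, not a verbatim transfer of the $E''_\ro$ argument.
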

\begin{proof}
We refer to \cite[Lemma 4.1]{cava} for the proof of the monotonicity of $E''_\ri$. Here we will deal with $E''_\ro$ only.
In the following, we will replace the integer parameter $k$ with a real variable $x$ and study the function $x\mapsto E''_\ro(x)$ in $(0,\infty)$.
The calculations are going to be pretty long, although elementary. For the sake of readability we will adopt the following notation:
\begin{equation}\label{notation pona}
L:=R^{-1}>1,\quad \lambda:=\log(L)>0,\quad M:=N-2\ge0;\quad P=P(x):=L^{2x+M}>1.
\end{equation}
Differentiating the expression for $E''_\ro$ in \eqref{in out res li seme} by $x$ yields the following expression
$$
\frac{d}{dx}E''_\ro(x)=  \frac{2\left(a(x)+\sg b(x)+\sg^2 x^2 c(x)\right)}{F^2},
$$
where we have set
\begin{equation}\nonumber
\begin{aligned}
a(x)&:=x^2 P^{-1}+ M(2x+M) - (x+M)^2 P - 2\lambda (2x^3+3 Mx^2+M^2 x),\\
b(x)&:=-2x^2 P^{-1} - M(2x+M) -2(M x + x^2) P + 2\lambda M (Mx + 2x^2),\\
c(x)&:= P^{-1} - P + 2\lambda (M+2x).
\end{aligned}
\end{equation}
In order to prove the lemma, it will be sufficient to show that $a(x)<0$, $b(x)<0$ and $c(x)<0$ for all $x> 0$.

We have
$$
\restr{a(x)}{M=0}= x^2\underbrace{(L^{-2x}-L^{2x})}_{<0}-4\lambda x^3 <0.
$$
Treating now $M$ as a real variable and differentiating yields:
$$
\frac{d}{dM}a(x)= -\lambda x^2 P^{-1}+ 2(x+M)\underbrace{(1-L^M)}_{<0} - \lambda (x+M)^2L^M- 2 \lambda(3x^2+2Mx)<0.
$$
This implies that $a(x)<0$ for all $x> 0$ and all $M\ge 0$. 

As far as $b(x)$ is concerned, we will decompose it further, as follows
$$
b(x)=-2x^2 P^{-1}- M(2x+M) + 2x \,\widetilde{b}(x), 
$$
where $\widetilde{b}(x):={   - (M+x)P + \lambda M(M+2x) }$.
We have $\widetilde{b}(0)=M(-L^M+\lambda M)$.
The quantity $-L^M+\lambda M$ is negative for all $M\ge 0$ because it takes the value $-1$ for $M=0$ and is a decreasing function of $M$. As a matter of fact, we have
$$
\frac{d}{dM}(-L^M+\lambda M)= -\lambda L^M + \lambda = \lambda (-L^M+1) <0.
$$
Hence $\widetilde{b}(0)<0$. We claim that $\widetilde{b}(x)$ is also decreasing in $x$, because
$$
\frac{d}{dx}\widetilde{b}(x)= -P -2\lambda (M+x)P + 2\lambda M = -P +2\lambda M (-P+1) - 2\lambda x P <0.
$$
We conclude that $\widetilde{b}(x)$ (and therefore also $b(x)$) is negative for $x\ge 0$.

Finally we show that $c(x)<0$ for $x>0$. 
We have $c(0)=L^{-M}-L^M+2\lambda M$. We claim that this quantity is non-positive for all $M\ge 0$. Indeed
$$
\restr{c(0)}{M=0}=0, \quad\text{ and }\quad \frac{d}{dM}c(0)= -\lambda L^{-M} (L^M-1)^2<0. 
$$
Moreover, since
$$
\frac{d}{dx}c(x)= -2\lambda P^{-1}-2\lambda P +4 \lambda = -2\lambda (P-1)^2<0,
$$
we conclude that also $c(x)<0$ for $x>0$. 
This implies that the function $x\mapsto E''_\ro(x)$ is  strictly decreasing in $(0,\infty)$, as claimed.
\end{proof}
In order to study the sign of $E''_\ri$ and $E''_\ro$ we proceed as follows. Notice that 
\begin{equation}\nonumber
E''_\ve (1)=2 (1-\sg)/F(1)\quad  \mbox{ and }\quad  \lim_{k\to\ali} E''_\ve (k)=-\ali \quad \mbox{for } \ve\in\{\ri,\ro\}.
 \end{equation}
Now, by applying Lemma \ref{lemma decreasing}, we get the behavior of  $E''_\ri$ and $E''_\ro$ as shown in Figure \ref{grafici}. 
\begin{figure}[h]
\includegraphics[width=1.2\linewidth,center]{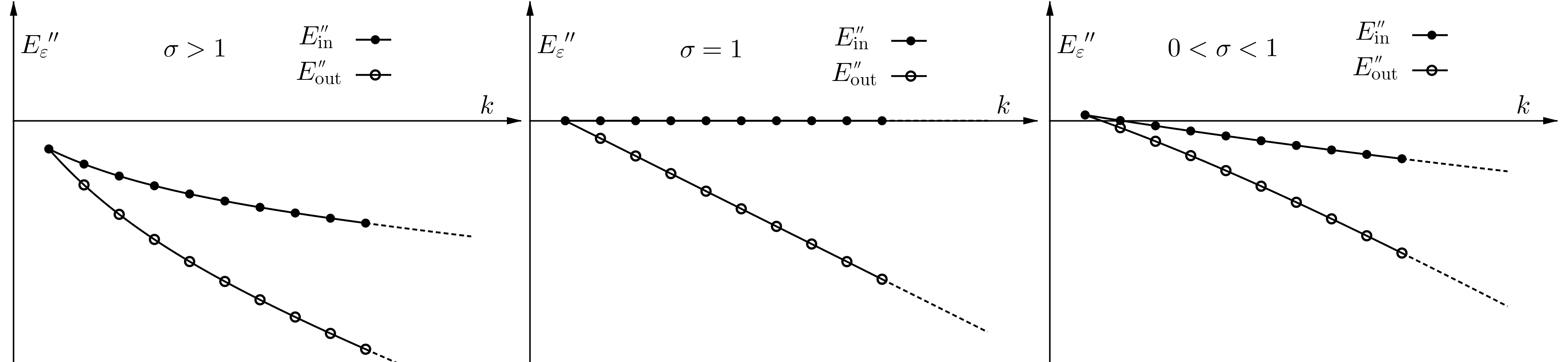}
\caption{The graphs of $E''_\ri$ and $E''_\ro$ for all possible values of $\sg$. When $\sg>1$, $E''_\ve(k)<0$ holds for all $k\ge 1$ and $\ve\in\{\ri,\ro\}$. On the other hand, when $0<\sg<1$, we have $E''_\ve(1)>0$ and $E''_\ve (k)<0$ for large $k$. The case $\sg=1$ is peculiar, as $E''_\ri\equiv 0$, while $E''_\ro(k)$ vanishes only for $k=1$ and is strictly negative otherwise. Finally it is worth noticing that the identity $E''_\ri(1)=E''_\ro(1)$ reflects the translation invariance of the functional $E$.
}  
\label{grafici}
\end{figure}

\subsection{Analysis of the resonance effects and proof of Theorem \ref{thm II}}\label{ssec 4.3} 
\begin{lemma}\label{lemma res then <=0}
Suppose that $\boldsymbol{\sg>1}$. For any $k\in\{1,2,\dots\}$ and $i\in\{1,\dots, d_k\}$ that satisfy $\al_{k,i}^\ri \al_{k,i}^\ro\ne 0$, we get: 
$$
\left(\al_{k,i}^\ri\right)^2E''_\ri(k)+\left(\al_{k,i}^\ro\right)^2E''_\ro(k)+\al_{k,i}^\ri \al_{k,i}^\ro E''_{\rm res}(k) \le0, 
$$
where equality holds if and only if $k=1$ (see Figure {\rm \ref{pentagoni}}, case V).
\end{lemma}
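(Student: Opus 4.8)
The left-hand side is a quadratic form in the pair of real coefficients $(\al_{k,i}^\ri,\al_{k,i}^\ro)$, so for each fixed $k$ I would set $a:=\al_{k,i}^\ri$, $b:=\al_{k,i}^\ro$ and study
$$
Q_k(a,b):=a^2 E''_\ri(k)+b^2 E''_\ro(k)+ab\,E''_{\rm res}(k),
$$
whose associated symmetric matrix is
$$
M_k:=\begin{pmatrix} E''_\ri(k) & \tfrac12 E''_{\rm res}(k)\\[2pt] \tfrac12 E''_{\rm res}(k) & E''_\ro(k)\end{pmatrix}.
$$
The assertion $Q_k\le0$ for all $(a,b)$ is exactly the statement that $M_k$ is negative semidefinite. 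Since the hypothesis $\sg>1$ forces both diagonal entries $E''_\ri(k)$ and $E''_\ro(k)$ to be strictly negative for every $k\ge1$ (this is the content of Lemma \ref{lemma decreasing} together with the discussion summarized in Figure \ref{grafici}), the trace of $M_k$ is already negative, and negative semidefiniteness is then equivalent to the single condition $\det M_k\ge0$.

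Thus the whole matter reduces to the sign of $\det M_k=E''_\ri(k)\,E''_\ro(k)-\tfrac14\big(E''_{\rm res}(k)\big)^2$. Inserting the explicit expressions from \eqref{in out res li seme}, every summand carries the common factor $F^{-2}$; as $F>0$ this factor may be dropped, and it suffices to prove that the resulting numerator -- a combination of powers of $R$ with $k$-dependent exponents and polynomials in $k$ -- is nonnegative. Equivalently, completing the square in $a$ gives
$$
Q_k(a,b)=E''_\ri(k)\Big(a+\tfrac{E''_{\rm res}(k)}{2E''_\ri(k)}\,b\Big)^2+\frac{\det M_k}{E''_\ri(k)}\,b^2,
$$
and since $E''_\ri(k)<0$ both coefficients on the right are $\le0$ precisely when $\det M_k\ge0$; this is the form in which the inequality $Q_k\le0$ is read off.

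The borderline case is $k=1$. Using the identity $E''_\ri(1)=E''_\ro(1)=2(1-\sg)/F(1)$ recorded before Figure \ref{grafici} and the direct evaluation $E''_{\rm res}(1)=4(\sg-1)/F(1)$, one finds
$$
M_1=\frac{2(1-\sg)}{F(1)}\begin{pmatrix}1 & -1\\ -1 & 1\end{pmatrix},
$$
a negative semidefinite matrix of rank one with null direction $a=b$; hence $Q_1(a,b)=\frac{2(1-\sg)}{F(1)}(a-b)^2\le0$, and under the constraint $\al_{k,i}^\ri\al_{k,i}^\ro\ne0$ equality is attained exactly when $a=b$. This degeneracy $\det M_1=0$ encodes the translation invariance of $E$ (cf.\ Figure \ref{grafici}) and corresponds to case V of Figure \ref{pentagoni}. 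It then remains to show $\det M_k>0$ strictly for $k\ge2$, which upgrades $M_k$ to negative definite and furnishes the strict inequality there. I expect this to be the main obstacle: after clearing $F^{-2}$, the numerator of $\det M_k$ mixes the exponential $R^{2-N-2k}$ with quadratic and cubic polynomials in $k$, so proving its strict positivity for $k\ge2$ will require either extracting an explicit factor vanishing at $k=1$ (making the simple root visible) or, in the spirit of the proof of Lemma \ref{lemma decreasing}, promoting $k$ to a real variable $x$ and running a derivative/monotonicity argument with the substitutions $L=R^{-1}$, $\la=\log L$, $M=N-2$, $P=L^{2x+M}$ introduced there.
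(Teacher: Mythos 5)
Your setup is correct, and it is in substance the same as the paper's: the paper normalizes by $\al_{k,i}^\ro$ and studies the one-variable quadratic $Q(t)=E''_\ri(k)\,t^2+E''_{\rm res}(k)\,t+E''_\ro(k)$, whose discriminant is $\De=-4\det M_k$, so your condition $\det M_k\ge 0$ and the paper's condition $\De\le 0$ are the same statement. Your observation that both diagonal entries are strictly negative for $\sg>1$ (so semidefiniteness reduces to the determinant sign) is valid, and your explicit treatment of $k=1$, giving $Q_1(a,b)=\tfrac{2(1-\sg)}{F(1)}(a-b)^2$ with null direction $a=b$, matches the paper's double root at $t=1$ and correctly identifies the equality case.

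The genuine gap is exactly the point you defer: for $k\ge 2$ you never prove $\det M_k>0$; you only state that you ``expect this to be the main obstacle'' and sketch two strategies that might work. That inequality \emph{is} the lemma --- without it, nothing is established for any $k\ge 2$. The paper closes it by carrying out the computation you postpone: it factors the discriminant explicitly as
\[
\De= \frac{-16 (\sg-1)(k-1) R^N}{\sg N^2 F^2}\,\Bigl( \sg k (R^{2-N-2k}-1)+ (N-2+k)R^{2-N-2k}+k  \Bigr)\cdot  G,
\]
where $G=(\sg-1)k (N-1+k) (R^{2-N-2k}-1)+(N-2+2k)R^{2-N-2k}$. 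Here the factor $(k-1)$ you hoped to ``extract'' appears explicitly, the middle factor is manifestly positive (since $R^{2-N-2k}>1$), and $G>0$ precisely because $\sg>1$; hence $\De\le 0$ with equality if and only if $k=1$, and for $k\ge 2$ one has $\De<0$, whence $Q$ has no real root and, since $Q(0)=E''_\ro(k)<0$, is negative everywhere. So your proposal is a correct reduction plus a correct boundary case, but the decisive algebraic fact --- the strict sign of $\det M_k$ for $k\ge 2$ --- is asserted as an expectation rather than proved, and as written the argument is incomplete.
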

\begin{proof}
Since, by hypothesis, $\al_{k,i}^\ro\ne0$, we can put $t:=\al_{k,i}^\ri/\al_{k,i}^\ro$. For $k$ fixed, we study the following quadratic polynomial in $t$:
$$
Q(t):=E''_\ri(k) t^2 + E''_{\rm res} (k) t +E''_\ro(k).
$$
It can be checked that the discriminant of $Q$ is 
$$
\De= \underbrace{\frac{-16 (\sg-1)(k-1) R^N}{\sg N^2 F^2}}_{\le0} \underbrace{\left( \sg k (R^{2-N-2k}-1)+ (N-2+k)R^{2-N-2k}+k  \right)}_{>0}\cdot  G,
$$
where we have set $G=(\sg-1)k (N-1+k) (R^{2-N-2k}-1)+(N-2+2k)R^{2-N-2k}$. 
We see immediately that $G>0$, as $\sg>1$ by hypothesis. We will distinguish two cases. If $k>1$, then $\De<0$ and therefore the quadratic polynomial $Q(t)$ has no real roots. Since $Q(0)=E''_\ro(k)<0$ (see Figure \ref{grafici}), then $Q$ must be strictly negative for all other values of $t$ as well. If $k=1$, then $\De=0$, which means that $Q(t)$ has one double root (which actually corresponds to $t=1$). We conclude as before. 
\end{proof}
\begin{figure}[h]\label{pentagoni}
\includegraphics[width=1.2\linewidth, center]{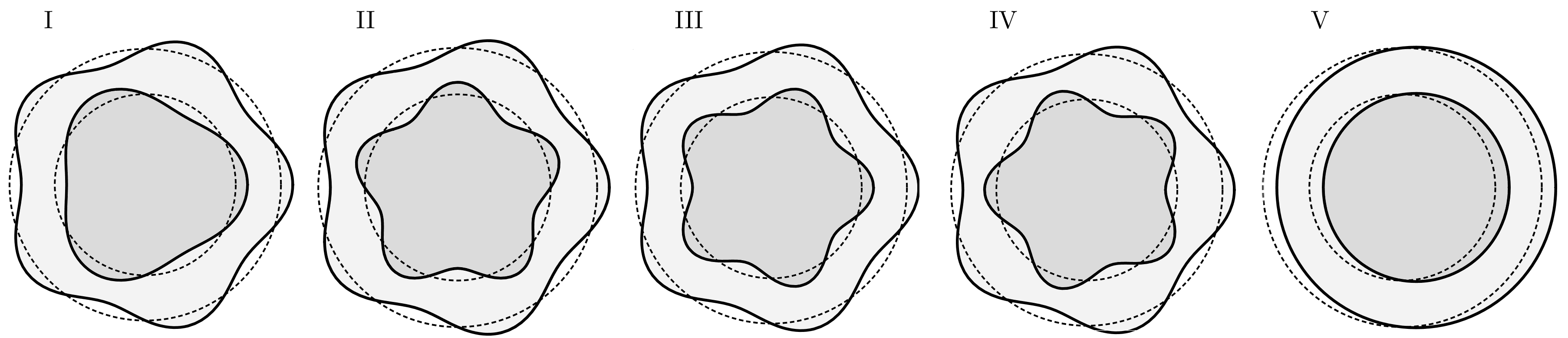}
\caption{How $(D_t,\Om_t)$ looks like for simple perturbations corresponding to
$(h_\ri\cdot n)(R\cdottone)= \al Y_{k,i}(\cdottone)$ and $h_\ro \cdot n = \be Y_{m,j}$,
for the following values of $k,i,m,j$ and $\al, \be$:\newline
I: $k=3, m=5$. II: $k=m=5$, $i\ne j$. III: $k=m=5$, $i=j$, $\al \be >0$. IV: $k=m=5$, $i=j$, $\al\be <0$. V: $k=m=1$, $i=j$, $\al=\be\ne0$.
Notice that resonance effects appear in cases III, IV and V. Moreover, as shown in Lemma \ref{lemma res then <=0}, V is the only case when $E''(\Phi)=0$ even for $\sg\ne 1$.
}
\label{pentagoni}
\end{figure}

We notice that, for all $\Phi\in\A_{\rm bar}^*$, by \eqref{barycenter preserving} and the properties of spherical harmonics, the coefficients $\al_{1,i}^\ro$ that appear in the expansion \eqref{h_in h_out exp} must vanish for $i=1,\dots,N$ (in particular we are able to avoid the case V of Figure \ref{pentagoni} by considering $\Phi\in\A_{\rm bar}^*$).
Combining this observation with the behavior of $E''_\ri$ and $E''_\ro$ shown in Figure \ref{grafici}, the result of Lemma \ref{lemma res then <=0} and Remark \ref{rmk extension} yields the main result of this paper. 
\begin{theorem}
If $\sg\ge1$, then $E''(\Phi)<0$ for all $\Phi\in\A_{\rm bar}^{*}$. In other words the configuration $(D_0,\Om_0)$ is a local maximum for the functional $E$ under the volume-preserving and barycenter-preserving constraint. 
If $\sg<1$, then there exist two perturbation fields $\Phi_1,\Phi_2\in\A_{\rm bar}^*$ such that $E''(\Phi_1)>0$ and $E''(\Phi_2)<0$. In other words, the configuration $(D_0,\Om_0)$ is a saddle shape for the functional $E$ under the volume and barycenter-preserving constraint. Notice that for $\sg=1$ we recover a local version of P\'olya's result {\rm\cite{polya}}.  
\end{theorem}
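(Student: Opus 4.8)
The plan is to read the sign of $E''(\Phi)$ off the diagonalised expansion \eqref{E''=in+out+res}, which is valid for every $\Phi\in\A^*$ once $h_\ri\cdot n$ and $h_\ro\cdot n$ are expanded in spherical harmonics as in \eqref{h_in h_out exp}. Writing
$$
T_{k,i}:=\left(\al_{k,i}^\ri\right)^2 E''_\ri(k)+\left(\al_{k,i}^\ro\right)^2 E''_\ro(k)+\al_{k,i}^\ri\al_{k,i}^\ro\, E''_{\rm res}(k),
$$
one has $E''(\Phi)=\sum_{k\ge1}\sum_{i=1}^{d_k} T_{k,i}$, so the entire theorem reduces to a mode-by-mode sign analysis. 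The decisive structural input is the barycenter constraint: by \eqref{barycenter preserving} and the orthogonality of the first harmonics $Y_{1,i}$ to all the others, every $\Phi\in\A_{\rm bar}^*$ satisfies $\al_{1,i}^\ro=0$ for $i=1,\dots,N$. This is exactly what will exclude the degenerate $k=1$ resonance (case V of Figure \ref{pentagoni}) and thereby upgrade the inequalities to strict ones.

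For $\sg>1$ I would show that $T_{k,i}\le0$ for every mode, with strict inequality whenever $(\al_{k,i}^\ri,\al_{k,i}^\ro)\ne(0,0)$. When at most one of the two coefficients is nonzero, this is immediate from Lemma \ref{lemma decreasing} together with $E''_\ve(1)=2(1-\sg)/F(1)<0$, which (since $E''_\ve$ is decreasing and $F>0$) forces $E''_\ve(k)<0$ for all $k\ge1$ and $\ve\in\{\ri,\ro\}$, as in Figure \ref{grafici}. When both coefficients are nonzero, Lemma \ref{lemma res then <=0} gives $T_{k,i}\le0$ with equality only at $k=1$; but $k=1$ resonance cannot occur here, precisely because $\al_{1,i}^\ro=0$. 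Summation then yields $E''(\Phi)<0$ for every $\Phi\in\A_{\rm bar}^*$ with nonvanishing normal data. The endpoint $\sg=1$ is degenerate: there $E''_\ri\equiv0$ and $E''_{\rm res}\equiv0$, so $E''(\Phi)=\sum_{k,i}(\al_{k,i}^\ro)^2 E''_\ro(k)$ collapses to a pure $\pa\Om_0$-contribution, and since $\al_{1,i}^\ro=0$ while $E''_\ro(k)<0$ for $k\ge2$, the interface perturbations become immaterial and we recover precisely a local version of P\'olya's theorem \cite{polya} for the ball $\Om_0$.

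For $\sg<1$ the task is instead to exhibit perturbations of both signs. Since $E''_\ri(1)=2(1-\sg)/F(1)>0$, I would take $\Phi_1$ to be a pure interior $k=1$ mode, i.e.\ $h_\ro=0$ and $(h_\ri\cdot n)(R\cdottone)=\al\,Y_{1,i}$, which amounts to an infinitesimal translation of the core $D_0$ inside the fixed outer ball; it satisfies \eqref{1st} and leaves the barycenter of $\Om_0$ untouched, so $\Phi_1\in\A_{\rm bar}^*$ with $E''(\Phi_1)=\al^2 E''_\ri(1)>0$. Conversely, since $E''_\ro(k)\to-\ali$ by Lemma \ref{lemma decreasing}, I would choose $\Phi_2$ to be a pure exterior mode $(h_\ro\cdot n)=\be\,Y_{k,j}$ with $k\ge2$ large enough that $E''_\ro(k)<0$; such a mode obeys \eqref{1st} on $\pa\Om_0$ and, being orthogonal to the $Y_{1,i}$, also \eqref{barycenter preserving}, so that $E''(\Phi_2)=\be^2 E''_\ro(k)<0$. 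Remark \ref{rmk extension} guarantees that both boundary data are realised by genuine fields in $\A_{\rm bar}^*$, and the coexistence of $\Phi_1$ and $\Phi_2$ is exactly the saddle statement.

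Because Lemmas \ref{lemma decreasing} and \ref{lemma res then <=0} already carry the analytic weight (the monotonicity of $E''_\ve$ and the sign of the resonant quadratic form via its discriminant), the only genuinely delicate point left in assembling the theorem is checking that the barycenter constraint \eqref{barycenter preserving} removes exactly the single degenerate mode -- the $k=1$ resonance -- that would otherwise prevent the inequality for $\sg>1$ from being strict; the remainder is sign bookkeeping across the three regimes $\sg>1$, $\sg=1$ and $\sg<1$.
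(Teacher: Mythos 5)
Your proposal is correct and follows essentially the same route as the paper: the paper's proof is precisely the one-sentence combination of the mode decomposition \eqref{E''=in+out+res}, Lemma \ref{lemma decreasing} (via Figure \ref{grafici}), Lemma \ref{lemma res then <=0}, the observation that \eqref{barycenter preserving} forces $\al_{1,i}^\ro=0$ and thus excludes the degenerate $k=1$ resonance, and Remark \ref{rmk extension} for realizing the test modes. You have merely written out the sign bookkeeping the paper leaves implicit (your explicit choices of $\Phi_1$, $\Phi_2$ for $\sg<1$ and your caveat about nonvanishing normal data at $\sg\ge1$ are exactly the intended reading).
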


\end{document}